\newtheorem{theorem}{Theorem}
\newtheorem{lemma}[theorem]{Lemma}
\newtheorem{conjecture}[theorem]{Conjecture}
\newtheorem{prop}[theorem]{Proposition}
\newtheorem{claim}[theorem]{Claim}
\newcommand{\R}{\mathbb{R}}
\newcommand{\eps}{\varepsilon}
\newcommand{\dd}{\,\mathrm{d}}
\newcommand{\Sc}{\mathcal{S}}
\newcommand{\Lc}{\mathcal{L}}
\newcommand{\x}{\mathbf{x}}
\DeclareMathOperator\arctanh{arctanh}
\author{Gergely Ambrus, Imre Bárány, Péter Frankl, Dániel Varga}
\thanks{
Research of GA was partially supported by ERC Advanced Grant "GeoScape no. 882971,  by the Hungarian National Research grant no. NKFIH KKP-133819,  and by project no. TKP2021-NVA-09. Project no.
TKP2021-NVA-09 has been implemented with the support provided by the
Ministry of Innovation and Technology of Hungary from the National
Research, Development and Innovation Fund, financed under the
TKP2021-NVA funding scheme.
Research of IB was partially supported by Hungarian National Research grants no. 131529, 131696, and 133819. Research of DV was supported by the Hungarian Ministry of Innovation and Technology NRDI Office within the framework of the Artificial Intelligence National Laboratory Program,  by the European Union project RRF-2.3.1-21-2022-00004 within the framework of the Hungarian Artificial Intelligence National Laboratory, and Hungarian National Excellence Grant 2018-1.2.1-NKP-00008.}
\title{Piercing the chessboard}
\keywords{Cells in a lattice, lines, discrete plank problems}
\subjclass[2020]{Primary 11H31, secondary 05B40, 52C30}
\date{\today}
\begin{document}

\maketitle

\begin{abstract}
We consider the minimum number of lines $h_n$ and $p_n$ needed to intersect or pierce, respectively, all the cells of the $n \times n$ chessboard. Determining these values can also be interpreted as a strengthening of the classical plank problem for integer points.  Using the symmetric plank theorem of K. Ball, we prove that $h_n = \lceil \frac n 2 \rceil$ for each $n \geq 1$. Studying the piercing problem, we show that $0.7n \leq  p_n \leq n-1$ for $n\geq 3$, where the upper bound is conjectured to be sharp. The lower bound is proven by using the linear programming method, whose limitations are also demonstrated.
\end{abstract}

\maketitle

\section{Cells and lines}
How many lines are needed to pierce each cell of the $n \times n$ chessboard? Likewise, what is the minimum number of lines required to intersect every cell? These innocent-looking questions serve as targets of the present note.

To start with, we introduce some notations. For $ n \geq 1$, let $Q_n$ denote the $n \times n$ chessboard embedded in $[-1,1]^2$. Its {\em cells} are the closed squares
\begin{equation}\label{cijdef}
c_{ij} = \left[ -1 + (i-1)\cdot \frac 2 n , -1 + i \cdot \frac 2 n\right] \times  \left[-1 + (j-1) \cdot \frac 2 n , -1 + j  \cdot \frac 2 n \right]
\end{equation}
with $i, j \in [n]$, where $[n] = \{ 1, \ldots, n \}$. A line $\ell \subset \R^2$ is said to {\em hit} or {\em intersect} a cell $c_{ij}$ if $\ell \cap  c_{ij} \neq \emptyset$, and it {\em pierces} $c_{ij}$ if $\ell \cap  \textrm{int} \, c_{ij} \neq \emptyset$. Let $h_n$ and $p_n$ be the minimal number of lines needed to hit or pierce, respectively,  each cell of $Q_n$.

The question of determining $p_n$ was raised by Bárány and Frankl in~\cite{BF21, BF21+}.
It turns out that the question has a close connection with the classical plank problem in the plane \cite{B51, T32}. In particular, the celebrated symmetric plank theorem of K. Ball~\cite{B91} implies that given any set of $n-1$ lines, there always exists a point $(x,y)$ in $[0,n-1]^2$ such that the interior of the cell $[x, x+1] \times [y,y+1]$ is not intersected by any of the lines. The present question boils down to the following (see Conjecture~\ref{conj_piercing}): does there exist an {\em integer point} with the same property? If true, this would provide a significant strengthening of the plank theorem in this special case, and could also initiate the study of  plank problems for lattice points. Even though we are not able to give a complete answer to the above question, we show nontrivial bounds on $p_n$ (see Theorem~\ref{thm_piercing_lower}).


Our first observations are trivial. Clearly, $h_n \leq p_n$ holds for each $n$. Piercing each column of $Q_n$ with a vertical line shows that $p_n \leq n$. More generally, all the cells of $Q_n$ can be pierced by $n$ parallel lines in any given direction which are at distance $\frac 2 n$ from each other in the $\ell_1$ distance and do not go through any grid points.
On a similar note, selecting every second vertical boundary line between the cells of $Q_n$ yields that $h_n \leq \lceil \frac{n}{2} \rceil$.

It is easy to give a sharp upper bound for the number of cells pierced by an arbitrary line. The following simple statement is part of the mathematical folklore (see also \cite{B83}):
\begin{prop}\label{prop_line}
 Every line pierces at most $2n -1$ cells of $Q_n$.
\end{prop}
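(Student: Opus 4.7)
The plan is to bound the number of cells pierced by $\ell$ via the number of interior grid lines that $\ell$ meets. First, we dispose of the degenerate cases: if $\ell$ is horizontal or vertical it pierces at most $n$ cells, and if $\ell$ coincides with a grid line it pierces none; both are $\leq 2n-1$ for $n\geq 1$. So we may assume $\ell$ has nonzero finite slope.

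Let $H_1, \ldots, H_{n-1}$ and $V_1, \ldots, V_{n-1}$ denote the $n-1$ interior horizontal and $n-1$ interior vertical grid lines separating the cells of $Q_n$. Since $\ell$ is parallel to neither family, it meets each $H_i$ and each $V_j$ in at most one point, producing at most $2(n-1)$ interior grid line crossings in total.

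Next, we list the pierced cells as $C_1, \ldots, C_k$ in the order in which they are met along $\ell$. Between consecutive cells, the line crosses from the interior of $C_j$ to the interior of $C_{j+1}$ through a boundary point $p_j$. The key step is to classify the $k-1$ transition points: either $p_j$ lies in the relative interior of an edge shared by the two cells (an \emph{edge transition}, contributing exactly one $H_i$ or $V_j$ crossing), or $p_j$ is a grid point (a \emph{corner transition}). In the latter case the two cells must be diagonally adjacent---otherwise, with nonzero finite slope, $\ell$ could not pass from one cell's interior through $p_j$ to the other's---so $p_j$ is strictly interior to $[-1,1]^2$ and lies on one $H_i$ and one $V_j$ simultaneously, contributing two crossings at a single point.

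Writing $a$ and $b$ for the numbers of edge and corner transitions, respectively, we then have $k - 1 = a + b$ and $a + 2b \leq 2(n-1)$, whence
\[
k = a + b + 1 \leq a + 2b + 1 \leq 2n - 1.
\]
The only place one must tread carefully is the corner-transition case: a line passing through a grid point pays two units of grid line crossings but advances by only one cell, so corner-cutting cannot improve on the linear bound.
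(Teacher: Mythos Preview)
Your argument is correct and is essentially the same approach as the paper's: order the pierced cells along $\ell$ and bound the number of transitions. The paper phrases this as a zig-zag observation---for a line of non-negative slope, each successive cell in $\sigma(\ell)$ is obtained by increasing one of the two indices, so $i+j$ is strictly increasing and takes at most $2n-1$ values---while your version tallies interior grid-line crossings; these are the same count in different bookkeeping. One small bonus of your write-up is that you handle the case where $\ell$ passes through a grid vertex explicitly, whereas the paper's one-line argument glosses over it (harmlessly, since then $i+j$ jumps by~$2$).
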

This readily implies that $p_n \geq \frac{n}{2}$. We note that higher dimensional versions of this estimate have been recently studied by Bárány and Frankl \cite{BF21, BF21+}.

Note that the analogue of Proposition~\ref{prop_line} does not hold for hitting: diagonals of the square $Q_n$ intersect $3n$ cells. Nevertheless, using the  symmetric plank theorem of K. Ball, we prove that the upper bound on $h_n$ given above is sharp:

\begin{theorem}\label{thm_hitting}
  For each $n \geq 1$, $h_n =  \lceil \frac{n}{2} \rceil$ holds.
\end{theorem}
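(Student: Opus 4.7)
The plan is to prove the two inequalities separately. The upper bound $h_n \leq \lceil n/2 \rceil$ is already observed in the introduction (every second vertical boundary line between the columns of $Q_n$ intersects every cell), so the task reduces to the matching lower bound $h_n \geq \lceil n/2 \rceil$. For this I would invoke K.~Ball's symmetric plank theorem, which applies because $Q_n = [-1,1]^2$ is centrally symmetric. The strategy is to convert any hitting set of $k$ lines into a family of $k$ planks covering $Q_n$, and let the plank theorem force $k \geq n/2$.

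The central step is the following plank construction. Let $\ell$ be a line in $\R^2$ with direction angle $\theta$, so its unit normal is $u = (-\sin\theta, \cos\theta)$. Each cell is a square of side $2/n$, and its width in the direction $u$ equals $w_\theta := \frac{2}{n}(|\cos\theta| + |\sin\theta|)$. If $\ell$ hits a cell $c$, then projecting onto $u$ collapses $\ell$ to a single point $\pi_\ell$, which must lie in the length-$w_\theta$ projection of $c$. Consequently $c$ lies inside the plank $P_\ell := \{x \in \R^2 : |\langle x, u\rangle - \pi_\ell| \leq w_\theta\}$ of width $2w_\theta = \frac{4}{n}(|\cos\theta|+|\sin\theta|)$ centered on $\ell$. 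Importantly, this one plank contains \emph{every} cell hit by $\ell$, since all such cells project to length-$w_\theta$ intervals containing $\pi_\ell$.

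Now suppose lines $\ell_1, \ldots, \ell_k$ with direction angles $\theta_1, \ldots, \theta_k$ together hit every cell of $Q_n$. Because the cells cover $Q_n$ and each cell is contained in some $P_{\ell_i}$, the planks $P_{\ell_1}, \ldots, P_{\ell_k}$ cover the origin-symmetric convex body $Q_n$. The width of $Q_n$ in the direction $u_i = (-\sin\theta_i, \cos\theta_i)$ equals $2(|\cos\theta_i|+|\sin\theta_i|)$, so the relative width of each plank is
\[
\frac{w(P_{\ell_i})}{w(Q_n, u_i)} = \frac{\frac{4}{n}(|\cos\theta_i|+|\sin\theta_i|)}{2(|\cos\theta_i|+|\sin\theta_i|)} = \frac{2}{n}.
\]
Ball's symmetric plank theorem then yields $\sum_{i=1}^k \frac{2}{n} \geq 1$, that is, $k \geq n/2$; since $k$ is a positive integer this forces $k \geq \lceil n/2 \rceil$.

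The main obstacle is the plank construction of the second paragraph: the plank has to cover not a single cell but \emph{all} cells meeting $\ell$, which is why it must be twice as wide as one cell in the normal direction. The clean cancellation of the factor $|\cos\theta|+|\sin\theta|$ between plank width and the width of $Q_n$ is what makes the relative width independent of the line direction, and this uniform $2/n$ is precisely what allows Ball's theorem to deliver the sharp constant $\lceil n/2 \rceil$.
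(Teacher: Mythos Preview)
Your proof is correct and uses essentially the same idea as the paper: both arguments invoke Ball's symmetric plank theorem, and the decisive observation in each is that the factor $|\cos\theta|+|\sin\theta|$ cancels, making the relative width of each plank exactly $2/n$ independently of direction. The paper phrases this in the dual language---working in $(\R^2,\|\cdot\|_\infty)$ with unit functionals $\varphi_u(\x)=\langle \x,u\rangle/\|u\|_1$ and applying the functional form of Ball's theorem to locate a point whose $\ell_\infty$-ball of radius slightly exceeding $2/n$ avoids all the lines, then arguing that such a square must contain a grid cell---whereas you argue on the primal side by assembling the hit cells of each line into a single Euclidean plank and invoking the covering form of Ball's Corollary directly; but these are two faces of the same argument.
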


Determining  $p_n$ proves to be more difficult. Surprisingly, and somewhat counter-intuitively, the upper bound $p_n \leq n$ can be improved: there exist several configurations of $n-1$ lines piercing all the cells of~$Q_n$. This is also the subject of a mathematics puzzle \cite{BS19} that appeared in The Guardian.
\begin{theorem}\label{thm_piercing_upper}
$p_2=2$ and for each $n \geq 3$, $p_n \leq n-1$.
\end{theorem}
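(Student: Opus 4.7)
The argument splits naturally into the base case $n=2$ and the constructive upper bound for $n\geq 3$.

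For $n=2$, Proposition~\ref{prop_line} shows that any single line pierces at most $2\cdot 2-1=3$ of the four cells of $Q_2$, so $p_2\geq 2$; the two main diagonals of $Q_2$ together pierce all four cells, so $p_2\leq 2$, yielding $p_2=2$.

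For $n\geq 3$ the plan is to exhibit an explicit family $\ell_1,\dots,\ell_{n-1}$ of lines piercing every cell of $Q_n$. A counting check is encouraging: since each line pierces at most $2n-1$ cells by Proposition~\ref{prop_line}, the inequality $(n-1)(2n-1)\geq n^2$ holds for all $n\geq 3$, so no a priori counting obstruction exists. To realise a tight configuration, each $\ell_k$ should saturate Proposition~\ref{prop_line}, i.e., enter and exit $Q_n$ through opposite sides while crossing every row and every column, thereby piercing a staircase of $2n-1$ cells. For $n=3$, this is achieved by the two concurrent lines $\ell_1\colon y=x/2$ and $\ell_2\colon y=-2x$ in the $[-1,1]^2$-coordinates of the paper: each is a five-cell staircase saturating Proposition~\ref{prop_line}, and a direct inspection shows their union is all nine cells of $Q_3$. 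For $n=6$, the Guardian puzzle cited in the introduction provides the required five-line configuration. For general $n$, I would build the family by combining some lines of slope close to $+1$ with some of slope close to $-1$, tuning the offsets so that the $n-1$ staircases interlock, possibly bootstrapping from the small cases via a periodic or inductive pattern.

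The main difficulty is verifying that the staircases really do cover every cell. A single-slope (parallel) family is doomed, since staircases of a common slope leave the cells on the opposite diagonal uncovered, so mixing positive and negative slopes is essential. The cell-by-cell verification is substantially simplified by the $D_4$-symmetries of $Q_n$, but the cells near the corners, reachable only by lines passing close to the boundary of $Q_n$, are the hardest to pierce and tend to dictate the precise choice of slopes and offsets in the construction.
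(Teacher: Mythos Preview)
Your handling of $p_2=2$ is fine, and the $n=3$ construction with $y=x/2$ and $y=-2x$ really does pierce all nine cells. But for general $n\geq 3$ you have written a plan, not a proof: ``I would build the family by combining some lines of slope close to $+1$ with some of slope close to $-1$, tuning the offsets \ldots\ possibly bootstrapping from the small cases via a periodic or inductive pattern'' does not exhibit the lines, and does not verify coverage. Referencing the Guardian puzzle for $n=6$ is not a proof either. The entire content of the theorem lies in producing and checking a construction for every $n$, and that is precisely what is missing.

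Moreover, one of your guiding intuitions is misleading. You assert that ``a single-slope (parallel) family is doomed'' and conclude that ``mixing positive and negative slopes is essential''. It is true that $n-1$ \emph{all}-parallel lines cannot succeed, but the actual construction in the paper is almost entirely single-slope: take the line $y=(1-\eps)x$ for a tiny $\eps>0$, and let $\ell_1,\dots,\ell_{n-2}$ be its translates by the vectors $\bigl(1-\tfrac{2i+1}{n},\,-1+\tfrac{2i+1}{n}\bigr)$, so that $\ell_i$ passes through the centre of the anti-diagonal cell $c_{(n-i)\,i}$. Each $\sigma(\ell_i)$ contains exactly three cells in row $i$ and at most two cells in every other row, and since consecutive $\ell_i$ differ by the anti-diagonal step $(-\tfrac{2}{n},\tfrac{2}{n})$, their snakes tile all of $Q_n$ between $\ell_1$ and $\ell_{n-2}$. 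Only the four cells $c_{1n},c_{2n},c_{(n-1)1},c_{n1}$ are missed, and a single extra line $\ell_0$ picks those up, giving $n-1$ lines in total. So $n-2$ of the $n-1$ lines are parallel, none of them is required to saturate the $2n-1$ bound of Proposition~\ref{prop_line}, and only one line of a different slope is needed. Your proposed two-slope interlocking scheme might also be made to work, but without the explicit offsets and the verification it remains a heuristic.
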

The lower bound $p_n \geq \frac n 2$ is not sharp either: using linear programming methods, we asymptotically strengthen it. Here comes our main result.
\begin{theorem}\label{thm_piercing_lower}
If $n$ is sufficiently large, then $p_n > 0.7 n $.
\end{theorem}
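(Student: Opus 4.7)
My plan is to lower-bound $p_n$ via linear programming duality. Viewing the piercing problem as set cover, $p_n$ is the optimum of
\begin{equation*}
\min \sum_{\ell} x_\ell \quad \text{s.t.} \quad \sum_{\ell \text{ piercing } c_{ij}} x_\ell \geq 1 \;\; \forall i,j \in [n], \;\; x_\ell \in \{0,1\},
\end{equation*}
where $\ell$ ranges over all lines. Relaxing $x_\ell \geq 0$ and dualising gives
\begin{equation*}
p_n \;\geq\; \max\Bigl\{\, \sum_{i,j} y_{ij} \,:\, y_{ij} \geq 0,\; \sum_{c_{ij} \text{ pierced by } \ell} y_{ij} \leq 1 \;\; \forall \ell \,\Bigr\},
\end{equation*}
so it suffices to exhibit a weighting of cells whose total mass exceeds $0.7n$ and whose line-sums never exceed $1$.

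Before the construction, I would make two standard reductions. First, two lines piercing the same set of cells impose the same constraint, and there are only $O(n^4)$ combinatorial types, so the dual is effectively a finite LP. Second, a generic perturbation of a line only enlarges its set of pierced cells, so it is enough to check the constraint on lines in general position (avoiding all grid vertices); for such lines, the clean identity $k = a+b+1$ holds, where $a$ and $b$ are the numbers of interior vertical and horizontal grid lines crossed, with $a, b \leq n-1$.

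The uniform weighting $y_{ij} \equiv 1/(2n-1)$ yields only $\sum y_{ij} \sim n/2$, matching the tight case of Proposition~\ref{prop_line}. To push past $n/2$ one must exploit the rigidity of extremal lines: a line piercing $2n-1$ cells satisfies $a = b = n-1$, hence must span $Q_n$ completely in both directions and has slope close to $\pm 1$. My plan is to take a suitably non-uniform profile $y_{ij} = \alpha + \beta\, \varphi(i,j)$ with $\varphi$ a mean-zero function symmetric under the dihedral symmetry group of $Q_n$ and depressed on cells traversed by near-diagonal lines, then tune $\alpha, \beta$ (and any remaining shape parameters of $\varphi$) so that all extremal lines collect weight exactly $1$ while off-diagonal cells carry compensating additional mass.

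The technical heart of the argument, and what I expect to be the main obstacle, is verifying the constraint $\sum_{c \in \ell} y_c \leq 1$ \emph{uniformly} in $\ell$. I would split the analysis by slope: nearly horizontal or vertical lines pierce at most $n + O(1)$ cells and are easily controlled; lines of intermediate slope span few rows or few columns and hence collect little mass; the delicate case is slopes close to $\pm 1$ spanning the full board, where one uses that such a line picks up exactly one or two cells in each row and reduces the constraint to an inequality on partial row-sums of $\varphi$. Thanks to the symmetry reduction, tuning $\alpha, \beta$ to maximise the total mass subject to these constraints becomes a tractable finite LP; solved either analytically in a well-chosen parametric family or numerically for moderate $n$ with an extrapolated analytic profile, the inequality $\sum y_{ij} > 0.7n$ should emerge for all sufficiently large $n$.
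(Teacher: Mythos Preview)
Your LP-duality framework is exactly the one the paper uses, but what you have written is a plan, not a proof: you never specify the weight function, never carry out the slope analysis, and conclude only that the bound ``should emerge''. The paper fills precisely these gaps, and its key device is a \emph{continuous approximation} that you are missing. One sets $w_{ij}=\tfrac n2\int_{c_{ij}}\mu(\x)\dd\x$ for a Lipschitz density $\mu$ on $[-1,1]^2$ and proves (this is the nontrivial step, done via a plank/snake area comparison) that the snake-sum along any line $\ell$ equals, up to $O(1/n)$, the line integral $\int_{\ell\cap[-1,1]^2}\mu(\x)\dd\|\x\|_1$ with respect to $\ell_1$ arc-length. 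The dual problem thus becomes: find $\mu\ge 0$ with all such line integrals at most $1$ and $\int_{[-1,1]^2}\mu>1.4$. The clean polynomial $\mu_1(\x)=\tfrac34(x^2-2x^2y^2+y^2)$ gives only $\int\mu_1=\tfrac43$, hence $p_n>(\tfrac23-\eps)n$; reaching $0.7$ required a numerically optimised four-term density $\mu_2$, whose line-integral constraint is then verified by (computer-assisted) elementary calculus. A two-parameter family $\alpha+\beta\varphi$ is unlikely to suffice.

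Two of your heuristics would also mislead you. A line of slope $\tfrac12$ crossing the full board still pierces about $\tfrac32 n$ cells, so ``intermediate slope'' is not automatically safe; the correct quantity governing the constraint is the $\ell_1$-length of $\ell\cap[-1,1]^2$, which is exactly what makes the continuous reformulation natural. And the right shape for the weight is not simply ``depressed along near-diagonal lines'': both $\mu_1$ and $\mu_2$ vanish at the centre \emph{and} at the corners of $[-1,1]^2$ and peak near the midpoints of the sides, reflecting that the binding constraints come from lines of long $\ell_1$ intersection (which include the sides themselves), not merely from near-diagonal lines.
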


The gap between the upper and lower estimates for $p_n$ is large, and there is certainly room for improvement. A computer search was carried out in order to find configurations of $n-2$ lines piercing all cells of $Q_n$ when $n \leq 15$, to no avail. Based on this computational evidence, we venture to formulate the following conjecture.

\begin{conjecture}\label{conj_piercing}
  For all $n \geq 3$, $p_n = n-1$.
\end{conjecture}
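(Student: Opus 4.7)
The plan is to prove $p_n \ge n-1$ by strengthening Ball's symmetric plank theorem to integer points. Reformulate the problem as follows: for a line $\ell:\ ax+by=c$, the set of integer points $(i,j)\in\{0,\ldots,n-1\}^2$ whose associated cell is pierced by $\ell$ forms a ``discrete strip'' $S_\ell$, and the conjecture is equivalent to the assertion that any $n-1$ such discrete strips leave at least one integer point uncovered. In the continuous analogue (replacing the grid by $[0,n-1]^2$), this already follows from Ball's theorem: each strip has plank-width $(|a|+|b|)/\sqrt{a^2+b^2}\le \sqrt 2$, and $n-1$ such widths sum to less than the diameter $(n-1)\sqrt 2$ of the square. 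The entire task is to upgrade this estimate from continuous to integer.

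The first step I would take is to sharpen the LP-dual argument behind Theorem~\ref{thm_piercing_lower}. The $0.7n$ bound is essentially driven by a near-uniform weighting of cells. I would instead use a weight $w_{ij}$ concentrated near the four corners of $Q_n$, exploiting the fact that a cell near a corner can be pierced only by lines whose slope lies in a narrow interval forced by that corner, and that such a line must then miss large regions on the opposite side of the board. Combining four such corner-weights via a partition-of-unity argument to control overlap should give a dual-feasible certificate yielding a fractional bound markedly better than $0.7n$, and optimally one approaching $n-1$ in the large-$n$ limit.

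The main obstacle, which I expect to be the hard part, is that the LP itself has an intrinsic integrality gap: as the paper makes explicit, fractional line covers of total weight strictly less than $n-1$ exist, so no purely LP-based argument can reach the conjectured value. Closing the last portion of the gap will require a combinatorial step exploiting the integrality of the \emph{number} of lines. A plausible route is a structural classification: assume for contradiction that $n-2$ lines pierce $Q_n$, use a corner analysis to restrict the slopes of lines hitting the four extreme cells $c_{1,1}, c_{1,n}, c_{n,1}, c_{n,n}$, and then argue by a counting-and-slack analysis (aided by Proposition~\ref{prop_line}) that some interior cell must be missed. A complementary route is induction on $n$ from the extremal configurations supplied by Theorem~\ref{thm_piercing_upper}, although the deletion step is delicate, since removing one line from a cover of $Q_n$ does not in general yield a cover of any $Q_{n-1}$. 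In view of the computational evidence for $n\le 15$ cited in the paper, any successful proof will likely need both ingredients: the refined LP bound to handle generic $n$, and a combinatorial argument to rule out the residual near-tight configurations.
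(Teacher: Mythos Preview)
Your proposal is not a proof, and neither does the paper provide one: Conjecture~\ref{conj_piercing} is explicitly left open. The paper establishes only the bounds $0.7n \le p_n \le n-1$ (Theorems~\ref{thm_piercing_upper} and~\ref{thm_piercing_lower}) and then demonstrates in Theorem~\ref{thm_LPupper} that the LP relaxation cannot close the gap, since fractional covers of weight below $0.925n$ exist. So there is no ``paper's own proof'' to compare against.

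As a research outline, your proposal is honest about its gaps but does not resolve them. The first step you describe---sharpening the LP-dual certificate with corner-concentrated weights---cannot reach $n-1$, and you yourself note this: Theorem~\ref{thm_LPupper} already certifies that the LP optimum is at most $0.925n$ (and the computational evidence on page~\pageref{fig6b} suggests it is closer to $0.79n$), so no feasible dual weighting can approach $n-1$. The second step, a ``combinatorial classification'' of near-tight configurations via corner analysis and Proposition~\ref{prop_line}, is where the entire content of the conjecture lies, and your proposal offers no mechanism for it beyond naming the ingredients. In particular, the corner cells $c_{11}, c_{1n}, c_{n1}, c_{nn}$ impose only mild slope constraints (any line through a corner cell can have arbitrary slope), and the induction idea is, as you concede, blocked by the fact that deleting a line from a piercing set does not restrict to a smaller board. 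What you have written is a correct diagnosis of why the problem is hard, not a proof.
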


Note that the Corollary of \cite{B91} implies that given a set of $n-1$ lines, there always exists a {\em translate} of a cell contained within $Q_n$ which is not pierced by any of these lines. Conjecture~\ref{conj_piercing} is the analogue of this statement for lattice cells.

Even though the linear programming method used for proving Theorem~\ref{thm_piercing_upper} may be strengthened, in Section~\ref{section_LPlimit} we demonstrate its limitations. Theorem~\ref{thm_LPupper} states that  this method cannot give a lower bound larger than $0.925 \, n$. The proof of Conjecture~\ref{conj_piercing} will require novel ideas.

We conclude the section with a short list of related works. Keszegh~\cite{K12} studied the  minimal number of segments of a polygonal path covering each vertex of a rectangular grid, and the problem was generalized in \cite{DGKT14} for finite point sets in the plane. Generalizing the problem we study here, Richter~\cite{R22} determined the minimum number of {\em monotonous polyominoes} covering each square of a rectangular chessboard. 

Our research topic is closely related to the field of {\em digital geometry} which aims at defining geometrical objects appearing in digital images composed of pixels, i.e. sets of points in $\mathbb{Z}^2$ (see \cite{R91, KR04}). We are going to build upon its notions and results. Finding the cells of a square grid that intersect a given line segment is of central interest here, thanks to its application in rendering polygons on a rasterized computer screen. The set of cells (pixels) arising from such an intersection is called a \textit{digital line}. Bresenham’s line algorithm \cite{B65} is a traditional method to \textit{rasterize} a line segment, i.e. turn it into a digital line. In the terminology of digital geometry, our question of interest is finding the minimal number of digital lines that cover a given square bitmap. 

\section{Hitting}
In this section we prove Theorem~\ref{thm_hitting}. We use the symmetric plank theorem of K. Ball. A functional defined on a finite-dimensional space $X$ is called {\em unit functional} if its operator norm equals to 1. 

\begin{lemma}[Ball~\cite{B91}]\label{lemma_planksymm}
If $(\varphi_i)_1^m$ is a sequence of unit functionals in a finite-dimensional normed space $X$, $(t_i)_1^m$ is a sequence of reals and $(w_i)_1^m$ is a sequence of positive numbers with $\sum_1^m w_i = 1$ then there is a point $x$ in the unit ball of $X$ for which
\[
|\varphi_i(x) - t_i|\geq w_i
\]
for every $i \in [m]$.
\end{lemma}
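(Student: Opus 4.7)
I would follow K.~Ball's original complex-analytic strategy, arguing by contradiction. Assume the open planks
\[
P_i = \{x \in B_X : |\varphi_i(x) - t_i| < w_i\}
\]
cover the unit ball $B_X$, and aim to construct a point of $B_X$ outside every $P_i$.

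\textbf{Step 1: Dual vectors.} For each $i \in [m]$, pick $y_i \in B_X$ with $\varphi_i(y_i) = 1$; these are available because $\|\varphi_i\| = 1$ is attained on the compact set $B_X$.

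\textbf{Step 2: Complex lift.} I would complexify $X$ to $X_\mathbb{C}$, fix a compatible complex norm, and extend each $\varphi_i$ to a $\mathbb{C}$-linear functional of norm $1$. For $\zeta = (\zeta_1, \ldots, \zeta_m)$ ranging over the torus $\mathbb{T}^m$, set
\[
y(\zeta) = \sum_{j=1}^m \zeta_j w_j y_j \in X_\mathbb{C},
\]
so that $\|y(\zeta)\|_{X_\mathbb{C}} \leq \sum_j w_j = 1$ by the triangle inequality. Because $\varphi_i(y_i) = 1$, each function $f_i(\zeta) := \varphi_i(y(\zeta)) - t_i$ is affine in every $\zeta_j$, and the coefficient of $\zeta_i$ equals $w_i$.

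\textbf{Step 3: A holomorphic identity on $\mathbb{T}^m$.} This is the heart of the argument. The natural auxiliary function is
\[
P(\zeta) = \prod_{i=1}^m f_i(\zeta)/w_i,
\]
whose coefficient of the top monomial $\zeta_1 \cdots \zeta_m$ works out to be the permanent of the matrix $[\varphi_i(y_j)]$ (which has $1$'s on the diagonal and entries bounded by $1$ off it). A Parseval estimate on $\mathbb{T}^m$ yields some $\zeta^\star$ with $\prod_i |f_i(\zeta^\star)| \geq \prod_i w_i$, but this is only a product bound. Ball's sharper step, which I would carry over, is to renormalise $P$ by Blaschke factors of the form $(\zeta_i - \alpha_i)/(1 - \overline{\alpha_i}\zeta_i)$ chosen so that the resulting function has constant Laurent coefficient equal to $1$ on the polydisc, and then to invoke the maximum modulus principle in several complex variables. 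This produces a single $\zeta^\star \in \mathbb{T}^m$ with $|f_i(\zeta^\star)| \geq w_i$ for every $i$ simultaneously.

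\textbf{Step 4: Real extraction.} Having obtained $y(\zeta^\star)$ in the complexified unit ball satisfying all the desired inequalities, I would average $y(\zeta^\star)$ and $y(\overline{\zeta^\star})$ under the canonical anti-holomorphic involution of $X_\mathbb{C}$ (equivalently, take the real projection along the axis aligned with the phases of $f_i(\zeta^\star)$). Since each $\varphi_i$ is real on $X$, the resulting real $x \in B_X$ still satisfies $|\varphi_i(x) - t_i| \geq w_i$ for every $i$, contradicting the assumed covering.

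\textbf{Main obstacle.} The sharp constant $\sum_i w_i = 1$ is what makes this result deep: Bang-type real sign-choice arguments handle only the case $t_i = 0$, so replacing $\{\pm 1\}^m$ by the richer torus $\mathbb{T}^m$ is essential. The principal technical obstacle is Step~3, namely producing a polynomial/Blaschke identity that forces \emph{each} $|f_i(\zeta^\star)|$ above $w_i$, not merely their product. I expect most of the effort to go into choosing the right Blaschke normalisation on the polydisc so that a single maximum-modulus inequality captures the simultaneous lower bounds; this is the technical heart of~\cite{B91}.
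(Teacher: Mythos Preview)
The paper does not prove Lemma~\ref{lemma_planksymm}; it is quoted verbatim from Ball~\cite{B91} and used as a black box in the proof of Theorem~\ref{thm_hitting}. So there is no ``paper's own proof'' to compare against, and any assessment has to be against Ball's original argument.

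Your outline captures the flavour of Ball's method (complexification, an auxiliary polynomial on the polydisc whose top Fourier coefficient is controlled by a permanent, Blaschke-type normalisation), but Step~4 as written contains a genuine gap. If $y(\zeta^\star)\in X_{\mathbb C}$ satisfies $|\varphi_i(y(\zeta^\star))-t_i|\ge w_i$ and you set $x=\tfrac12\bigl(y(\zeta^\star)+\overline{y(\zeta^\star)}\bigr)=\operatorname{Re} y(\zeta^\star)$, then, since $t_i\in\mathbb R$ and $\varphi_i$ is real on $X$,
\[
|\varphi_i(x)-t_i|=\bigl|\operatorname{Re}\bigl(\varphi_i(y(\zeta^\star))-t_i\bigr)\bigr|\le |\varphi_i(y(\zeta^\star))-t_i|,
\]
which is the wrong direction: the lower bounds need not survive the averaging. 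Your parenthetical fix (``project along the axis aligned with the phases of $f_i(\zeta^\star)$'') cannot work either, because the $m$ phases are in general different and no single real projection aligns with all of them. Ball avoids this trap by organising the argument inductively in $m$: at each stage a one-variable complex-analytic lemma (involving a Blaschke factor) is used to pin down a \emph{real} value of the next coordinate, so the final point is real by construction and no extraction step is needed. Your global torus argument would need an additional, nontrivial idea to return to $X$ from $X_{\mathbb C}$.
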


\begin{proof}[Proof of Theorem~\ref{thm_hitting}]
By the remark  preceding Proposition~\ref{prop_line}  it suffices to prove $h_n \geq \lceil \frac n 2 \rceil$. Assume on the contrary that there exists a set of $m := \lceil \frac n 2 \rceil -1$ lines,
$\{\ell_1, \ldots, \ell_m \}$, intersecting each cell of $Q_n$. Let  $u_i$ denote the (Euclidean) unit normal of $\ell_i$. Let $X$ be the space $\R^2$ endowed with the $\ell_\infty$-norm, that is, with unit ball $[-1,1]^2$. To each planar vector $u \neq 0$, assign the linear functional
\begin{equation}\label{phidef}
\varphi_u: \ \x \mapsto \frac{  \langle \x, u \rangle }{\|u \|_1}
\end{equation}
which has norm 1 in $X^*$.
Set $\varphi_i := \varphi_{u_i}$ for each $i \in [m]$. Then
\begin{equation}\label{ellidef}
\ell_i = \{\x \in \R^2: \ \varphi_i(\x) = t_i  \}
\end{equation}
holds for every $i \in [m]$ with some $t_i \in \R$.
Note that the set of points $\x \in \R^2$ for which $|\varphi_i(\x) - t_i| \leq w$ holds equals to the union of all (closed) squares of edge length $ 2 w$ whose center lies on~ $\ell_i$.

Set $\eps >0 $ so that $(\frac 2 n + \eps)(\lceil \frac n 2 \rceil - 1) = 1$, and let $w_i = \frac 2 n + \eps$ for every $i$. Lemma~\ref{lemma_planksymm} implies the existence of $\x \in [- 1,1]^2$ for which
\[
|\varphi_i(\x) - t_i|\geq \frac 2 n + \eps
\]
holds for each $i$. This is equivalent to the fact that the open square
\[
\x + \Big(- \frac 2  n  - \eps, \frac 2 n  + \eps \Big)^2
\]
is not met by any of the lines $\ell_i$. We finally observe that any open square of side length strictly greater than $\frac 4 n $ centered in $[-1,1]^2$   contains a cell of~$Q_n$.
\end{proof}

\section{A piercing construction}

To each line $\ell$ we assign the corresponding {\em digital line}  \cite{R91} $\sigma(\ell)$ of cells of $Q_n$ which are pierced by $\ell$ :
\begin{equation}\label{sigmadef}
  \sigma(\ell) = \{ c_{ij}: \ i,j \in [n] \textrm{ and } \ell \textrm{ pierces }  c_{ij} \}
\end{equation}
(see the shaded region on Figure~\ref{fig1}a)). Note that the same concept appears by a variety of names in the literature, e.g. {\em digital straight line segment} \cite{R74}, 
{\em chaincode string} \cite{DS84}, {\em discrete segment} \cite{MI85}; see also the closely related notions of  {\em standard} and {\em naive}  discrete lines \cite{DRR95}.

\medskip
We continue with a nontrivial upper bound on $p_n$.

\begin{proof}[Proof of Theorem~\ref{thm_piercing_upper}]
We will construct a set of $n-1$ lines which pierces every cell of $Q_n$.
Define the line $\ell$ by the equation
\[
y =  (1 - \eps) x
\]
where $\eps >0$ is a small positive number, for instance $\eps=\frac 1{n^2}$ will do.
Thus, $\ell$ is obtained by a small clockwise rotation of the line $y=x$ about the origin, see Figure~\ref{fig1}a).

   Let now
\[
\ell_i = \ell + \left(1 - \frac {2i+1}{n}, -1 + \frac {2i+1}{n} \right)
\]
for each $i \in [n-2]$,  see Figure~\ref{fig1}b). Then $\ell_i$ passes through the center of the cell $c_{(n-i) \, i}$  and intersects its boundary on its two vertical sides (because its slope is slightly less than one). Choose the value of $\eps$  so that $\sigma(\ell_i)$ contains exactly three cells in row $i$, at most $2$ cells in any other row of $Q_n$, and $\ell_i$ does not pass through any vertex of a cell in $Q_n$. Then along with any cell of $\sigma(\ell_i)$, it also contains a horizontal or vertical neighbour of the cell. Since $\ell_{i+1}$ is obtained from $\ell_{i}$ by a translation with $(-\frac 2 n, \frac 2 n )$, this implies that $\bigcup_{i=1}^{n-2}\sigma(\ell_i)$ covers each cell of $Q_n$ in between $\ell_1$ and $\ell_{n-2}$ (simply check the off-diagonal chains of cells). Therefore, the lines $\ell_1, \ldots, \ell_{n-2}$ pierce all the cells of $Q_n$ except $c_{1n},c_{2n}$ and $c_{(n-1)\, 1}c_{n \,1}$ (equivalently, the corresponding digital lines cover all the cells except for these). Clearly these four cells can be pierced by one line $\ell_0$, which leads to a set of $n-1$ lines piercing every cell of $Q_n$.
\end{proof}

\begin{figure}[h]
   \centering
  \includegraphics[width = 0.8 \textwidth]{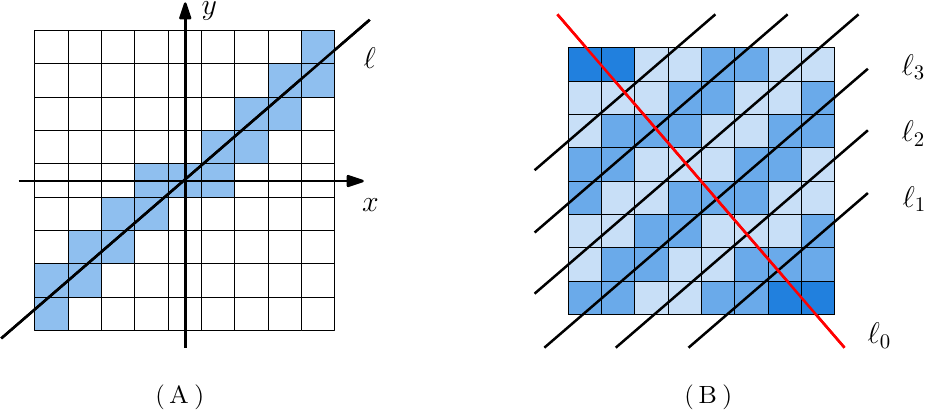}
  \caption{Piercing $Q_n$ with $n-1$ lines. }
    \label{fig1}
\end{figure}

We note that the above construction is not unique. Indeed, the same pattern works using translates of a line which pierces exactly 3 consecutive cells in some row and does not pass through any grid point.
Thus, for any $\alpha \in (\frac 1 3, 1)$ there exists a set of $n-2$ lines of slope $\alpha$, along with a line of slope $-\frac{n-1}{n-2}$ which pierce $Q_n$.

Going even further, we challenge the dedicated reader to find a piercing configuration which consists of $k$ parallel lines of an approximately diagonal direction and $n - 1 - k$ lines in the orthogonal direction, for each $k \in [n-2]$. (The two families of lines have to be positioned in a cross-like pattern, so that every boundary cell is pierced by them.)

\section{Few lines do not pierce}

\begin{proof}[Proof of Proposition~\ref{prop_line}]
 Assume the slope of $\ell$ is non-negative. We move a point on $\ell$ from left to right and order the cells in $\sigma(\ell)$ (cf. \eqref{sigmadef}) in the order the point enters them. If $c_{ij}\in \sigma(\ell)$, then the next cell in this order is either $c_{(i+1)j}$ or $c_{i(j+1)}$. The cells in $\sigma(\ell)$ thus form a zig-zag going right or up at each step.
\end{proof}

The lower bound $p_n \geq \frac n 2$ follows from Proposition~\ref{prop_line}. We are going to improve this lower bound by the linear programming method that yields the following
statement. Below, $\x = (x, y)$, and $\dd \|\x\|_1$ stands for $\dd(|x| + |y|)$.

\begin{lemma}\label{lemma_LPest}
Assume that $\mu: [-1,1]^2 \rightarrow \R_{\geq 0}$ is a Lipschitz continuous density function such that for each line $\ell$ in the plane,
\begin{equation}\label{ellcond}
  \int_{\ell \,\cap\, [-1,1]^2} \mu(\x) \dd \|\x\|_1 \leq 1.
\end{equation}
Then for any $\eps> 0$,
\[
p_n > \Big (  \frac 1 2 \int_{[-1,1]^2} \mu(\x) \dd \x  - \eps \Big ) n
\]
holds if $n$ is sufficiently large.
\end{lemma}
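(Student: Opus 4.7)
The plan is to use the Lipschitz density $\mu$ as a fractional dual LP certificate: the line-integral constraint \eqref{ellcond} limits how much mass any single piercing line can ``cover'', and since the piercing lines collectively hit every cell of $Q_n$, comparing these two tallies bounds $p_n$ from below. Fix a piercing configuration $\ell_1, \ldots, \ell_p$ with $p = p_n$; let $\x_c$ denote the center of cell $c$, and for each $c \in \sigma(\ell)$ write $L_c$ for the $\ell_1$-length of the chord $\ell \cap c$. The core cell-sum estimate I aim for is
\[
  \sum_{c \in \sigma(\ell)} \mu(\x_c) = \frac{n}{2} \int_{\ell \cap [-1,1]^2} \mu(\x) \dd \|\x\|_1 + O(1) \qquad (\ast)
\]
for every line $\ell$, with constants depending only on $\mu$.

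Given $(\ast)$, the lemma follows routinely. Summing $(\ast)$ over $k = 1, \ldots, p$ and applying \eqref{ellcond} yields $\sum_k \sum_{c \in \sigma(\ell_k)} \mu(\x_c) \leq \frac{pn}{2} + O(p)$. Since every cell of $Q_n$ is pierced, the left side is at least $\sum_c \mu(\x_c)$, which by a Lipschitz Riemann-sum estimate equals $\frac{n^2}{4} \int_{[-1,1]^2} \mu(\x) \dd \x + O(n)$. Rearranging gives $p \geq \frac{n}{2} \int_{[-1,1]^2} \mu \dd \x - O(1)$, and for sufficiently large $n$ this exceeds $\bigl( \frac{1}{2} \int \mu - \eps \bigr) n$.

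The estimate $(\ast)$ splits into two Lipschitz approximations. Cell-wise, $\int_{\ell \cap c} \mu \dd \|\x\|_1 = \mu(\x_c) L_c + O(L_c/n)$; summing gives $\int_{\ell \cap [-1,1]^2} \mu \dd \|\x\|_1 = \sum_c \mu(\x_c) L_c + O(1/n)$. It remains to replace the $L_c$-weighted sum by the uniform $\frac{2}{n}$-weighted one with error $O(1/n)$. Indexing $\sigma(\ell)$ as $c_1, \ldots, c_N$ in the order $\ell$ traverses them, the crucial observation is that the partial sums $S_i = \sum_{j \leq i} L_{c_j} - \frac{2i}{n}$ satisfy $|S_i| \leq \frac{2}{n}$: after $i$ snake cells the line has crossed some $r$ vertical and $u$ horizontal grid lines with $r + u + 1 = i$, so the cumulative $\ell_1$-distance from $\ell$'s entry into $Q_n$ to the exit of $c_i$ equals $\frac{2(r+u)}{n}$ up to a correction of at most $\frac{2}{n}$ coming from the line's offset within the first and last cells. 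Combined with the Lipschitz bound $|\mu(\x_{c_{i+1}}) - \mu(\x_{c_i})| = O(1/n)$ for adjacent snake cells (whose centers lie at Euclidean distance $2/n$) and $N = O(n)$, Abel's summation formula
\[
  \sum_{i=1}^N \mu(\x_{c_i}) \bigl( L_{c_i} - \tfrac{2}{n} \bigr) = \mu(\x_{c_N}) S_N - \sum_{i=1}^{N-1} \bigl( \mu(\x_{c_{i+1}}) - \mu(\x_{c_i}) \bigr) S_i
\]
totals $O(1/n)$, completing $(\ast)$. The main technical step is precisely this uniform partial-sum estimate for $S_i$, which encodes the clean averaging identity that each grid-line crossing contributes exactly $\frac{2}{n}$ to both the $\ell_1$-distance and to the snake length; once this is in hand, the Abel cancellation and the remaining Riemann-sum arithmetic are routine.
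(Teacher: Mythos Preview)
Your argument is correct and reaches the same conclusion as the paper, but the key technical step is handled differently. The paper assigns cell weights $w_{ij}=\tfrac n2\int_{c_{ij}}\mu$ and compares $\sum_{c\in\sigma(\ell)}w_{ij}$ to the integral of $\mu$ over a \emph{plank} $P(\ell)$ of $\ell_1$-width $\tfrac 2n$ around $\ell$: the pieces of the snake region $S(\ell)$ that stick out of $P(\ell)$ are translated by $\pm(\tfrac 2n,\tfrac 2n)$ back into $P(\ell)$ without overlap, and the Lipschitz bound controls the change in $\mu$ under this translation. Your route instead stays on the line itself: you compare the $L_c$-weighted cell sum to the uniform $\tfrac 2n$-weighted one via Abel summation, using the fact that the partial $\ell_1$-arclengths along $\ell$ track $\tfrac{2i}{n}$ to within $O(1/n)$. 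Both approaches encode the same averaging principle (each grid-line crossing contributes $\tfrac 2n$ of $\ell_1$-length), but the paper's plank picture is geometric while yours is an analytic summation-by-parts; yours is arguably more elementary and avoids introducing the auxiliary region $P(\ell)$.

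One small point worth making explicit: your snake indexing and the bound $|S_i|\le\tfrac 2n$ tacitly assume $\ell$ crosses grid lines one at a time, i.e.\ does not pass through a grid vertex. This is harmless---any piercing configuration can be perturbed to a generic one since piercing is an open condition---but it should be stated; the paper's plank argument, by contrast, works uniformly for all lines without this reduction. (Also, strictly speaking you only need the upper bound in $(\ast)$, which is all the perturbation gives for non-generic $\ell$.)
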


\begin{proof}

Denote by $\Lc$ the set of all lines intersecting $[-1,1]^2$, and set $\Sc_n = \{ \sigma(\ell): \ \ell \in \Lc \}$. So $\Sc_n$ is the set of all  digital lines of $Q_n$.
Clearly, $\Sc_n$ is a finite set, and for every element of $\Sc_n$ there exists a line which pierces the cells therein.
Determining $p_n$ is equivalent to finding the optimal value of the following integer linear program (LP$_i$):

\begin{equation}
\tag*{\hspace{10 pt}(LP$_i$)}
\begin{aligned}
&\textrm{Minimize  }\sum_{\sigma \in \Sc_n} \rho(\sigma) \textrm{ subject to} \\
&\rho(\sigma) \in \{ 0, 1 \} \textrm{ for all } \sigma \in \Sc_n \textrm{, and }\\
&\sum_{\sigma \in \Sc_n:\ c_{ij} \in \sigma} \rho(\sigma) \geq 1 \textrm{  for every } i,j \in [n]
.
\end{aligned}
\end{equation}

Therefore, the optimal value of the following {\em continuous} linear program (LP$_c$) gives a lower bound on~$p_n$:

\begin{equation}
\tag*{\hspace{10 pt}(LP$_c$)}
\begin{aligned}
&\textrm{Minimize  }\sum_{\sigma \in \Sc_n} \rho(\sigma) \textrm{ subject to} \\
&\rho(\sigma) \geq 0 \textrm{ for all } \sigma \in \Sc_n \textrm{, and }\\
&\sum_{\sigma \in \Sc_n:\ c_{ij} \in \sigma} \rho(\sigma) \geq 1 \textrm{  for every } i,j \in [n]
.
\end{aligned}
\end{equation}

Taking  the dual program of (LP$_c$) leads to the following setup.
Let $w: [n]\times[n] \rightarrow [0,1]$  be a weight function, and use the notation $w_{ij} = w(i,j)$. Let $M$ be the solution of the following continuous linear program:

\begin{equation}
\tag*{\hspace{10 pt}(LP$_d$)}
\begin{aligned}
&\textrm{Maximize  }\sum_{i,j=1}^n w_{ij} \textrm{ subject to} \\
&w_{ij} \geq 0 \textrm{ for every } i,j =[n]\textrm{ and }\\
&\sum_{i,j \in [n]: \ c_{ij} \in \sigma} w_{ij}\   \leq 1
\textrm{ for every  } \sigma \in S_n.
\end{aligned}
\end{equation}

\noindent
By weak linear programming duality, the optimal value $M$ of (LP$_d$) gives a lower bound on that of (LP$_c$), which in turn gives a lower bound on the solution of (LP$_i$). Thus,
\begin{equation}\label{pnM}
p_n \geq M.
\end{equation}
(Note that this fact also follows elementarily, without referring to LP duality. The above linear programs depend on $n$ but we suppress this dependence.)

Thus, we face the problem: How to solve (LP$_d$)? Since there are only finitely many  digital lines in $\Sc_n$,  the number of constraints in (LP$_d$) is finite. Therefore, (LP$_d$) can be solved computationally, at least for small values of $n$. In the case $n=30$, the optimal weight distribution on $Q_n$ found by computational methods is plotted on Figure~\ref{fig2}. This yields the estimate $p_n \geq 0.7205n$ for $n=30$.

\begin{figure}[h]
  \centering
  \includegraphics[width = 0.45 \textwidth]{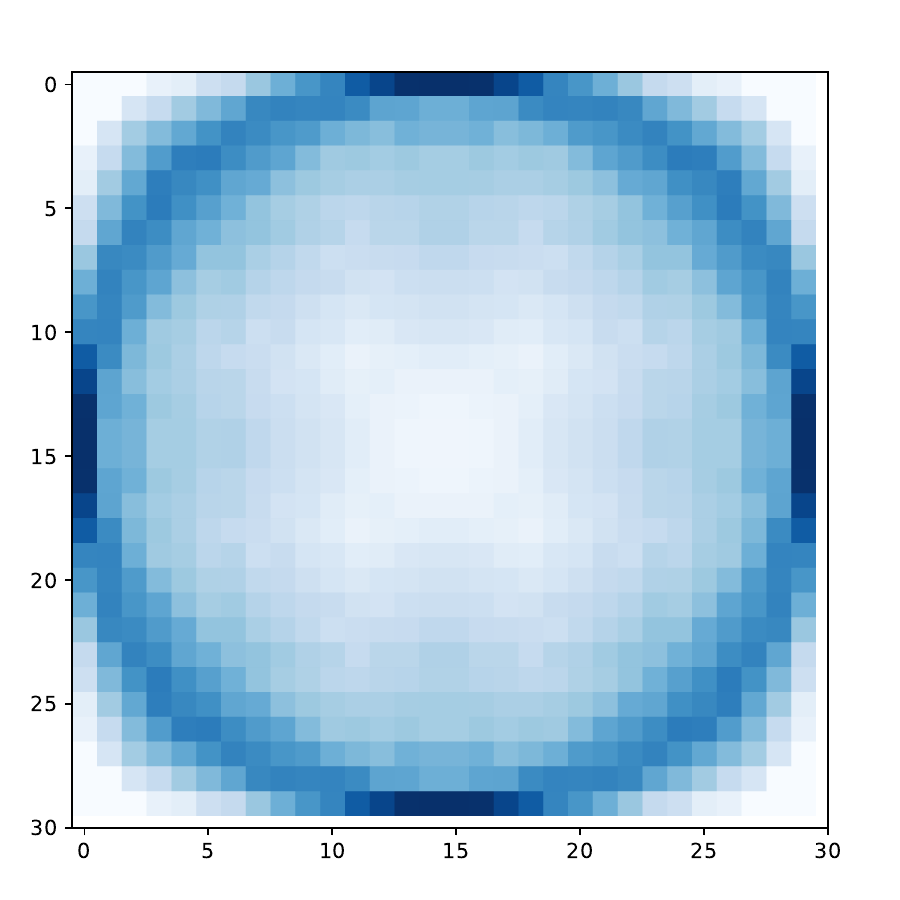}
  \caption{Optimal weight distribution on $Q_{n}$ with $n = 30$, where lighter (white) colors represent values close to 0. }
  \label{fig2}
\end{figure}

However, for large values of $n$ the computational approach breaks down; solving the $n=30$ case already required several hours of computation.
We are going to replace (LP$_d$) by its continuous approximation when $n \rightarrow \infty$.

Let $\mu: [-1,1]^2 \rightarrow \R_{\geq 0}$ be a density function which is Lipschitz continuous with respect to the Euclidean distance, with constant~$\lambda$.
To each cell $c_{ij}$ of $Q_n$ we assign the weight
\begin{equation}\label{wij}
w_{ij} = \frac n 2 \int_{c_{ij}}\mu(\x) \dd \x
\end{equation}
where $\dd \x$ stands for the standard Lebesgue measure.
Note that
\begin{equation}\label{sumwij}
\sum_{i,j=1}^n w_{ij} = \frac n 2 \int_{[-1,1]^2} \mu(\x) \dd \x.
\end{equation}
On the other hand, for each line $\ell$ defined by the equation $\varphi_u(\x) = t$ (cf. \eqref{phidef}), we introduce the corresponding plank
\begin{equation}
P(\ell) = \Big\{ \x \in \Big[-1 - \frac 2 n, 1 + \frac 2 n \Big]^2:\ |\varphi_u(\x) - t| \leq \frac 1 n \Big\}.
\end{equation}
Then $P(\ell)$ is the intersection of $[-1 - \frac 2 n,1+ \frac 2 n]^2$ with the union of the squares of edge length $\frac 2 n$ centered on $\ell$.
Note that $P(\ell)$ is not contained in $Q_n$, a small part of it is outside.

\begin{figure}[h]
  \centering
  \includegraphics[width = 0.5 \textwidth]{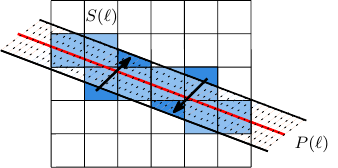}
  \caption{Comparison of the regions $P(\ell)$ and $S(\ell)$}
\label{fig3}
\end{figure}

\begin{claim}
Using the above notations, for each line $\ell \in \Lc$,
\[
\sum_{i,j, \in [n]: \ c_{ij} \in \sigma(\ell)} w_{ij} \leq \frac n 2 \int_{P(\ell)} \mu(\x)  \dd \x +  \frac {34 \lambda} {n} \,.
\]
\end{claim}
\begin{proof}
We may assume that the slope of $\ell$ is non-positive. Then, the upper boundary line of $P(\ell)$ is obtained from the lower boundary line by a translation with $v =(\frac 2 n, \frac 2 n)$.

Let
\[
S(\ell) = \bigcup_{\sigma(\ell)} c_{ij}.
\]
Notice that $S(\ell) \cap( S(\ell) + v )= \emptyset$ and $S(\ell) \cap( S(\ell) - v )= \emptyset$. Observe that if a cell $c_{ij} \in \sigma(\ell)$ reaches below $P(\ell)$, then $( c_{ij} \setminus P(\ell) ) + v \subset P(\ell)$, and similarly, if $c_{ij} \in \sigma(\ell)$ reaches above $P(\ell)$, then $( c_{ij} \setminus P(\ell) ) - v \subset P(\ell)$, unless $c_{ij}$ is at the boundary of $Q'_n$ (see Figure~\ref{fig3}). Thus, the parts of $S(\ell)$ which are not contained in $P(\ell)$ may be moved into $P(\ell)$ by a translation with either $v$ or $-v$, without creating overlaps (note that this is the reason for defining $P(\ell)$ on $[-1 - \frac 2 n,1+ \frac 2 n]^2$  instead of $[-1, 1]^2$). Therefore, by the Lipschitz property of $\mu$,
\begin{align*}
 \sum_{i,j, \in [n]: \ c_{ij} \in \sigma(\ell)} w_{ij} &= \frac n 2 \int_{S(\ell)} \mu(\x) \dd \x
 \leq \frac n 2 \int_{P(\ell)} \left(\mu(\x) +  \frac{2 \sqrt{2}}{n}\lambda \right) \dd \x \\
 &\leq \frac n 2 \int_{P(\ell)} \mu(\x)  \dd \x +\sqrt{2} \lambda \, \textrm{Area}(P(\ell)) \\
 & \leq \frac n 2 \int_{P(\ell)} \mu(\x)  \dd \x + \sqrt{2} \lambda \cdot \frac 2 n \cdot 4 \left(1 + \frac 2 n  \right)\\
 & \leq \frac n 2 \int_{P(\ell)} \mu(\x)  \dd \x  + \frac {34 \lambda}{n}\,.
 \qedhere
\end{align*}
\end{proof}

Now, the proof of the Lemma is easy to complete.
The Lipschitz property of $\mu$ ensures that as $n \rightarrow \infty$,
\[
 \frac n 2 \int_{P(\ell)} \mu(\x)  \dd \x  + \frac {34 \lambda}{n}\rightarrow \int_{\ell \,\cap\, [-1,1]^2} \mu(\x) \dd \|\x\|_1 \,.
\]
Thus, \eqref{ellcond} guarantees that the weights $w_{ij}$ given by \eqref{wij} satisfy the criterion of (LP$_d$), and hence by \eqref{pnM} and \eqref{sumwij}, $\frac n 2 \int_{[-1,1]^2} \mu(\x) \dd \x$ provides a lower bound on $p_n$.
\end{proof}

In view of Lemma~\ref{lemma_LPest}, the next  task is to find a suitable density function on $[-1,1]^2$.

\begin{proof}[Proof of Theorem~\ref{thm_piercing_lower}] In order to illustrate the method, we will first consider the following density function which leads to a slightly weaker estimate:
\begin{equation}\label{mu1}
{\mu_1}(\x) = \frac 3 4 ( x^2-2x^2y^2+y^2)
\end{equation}
(see Figure~\ref{fig4a}).
The function $\mu_1(\x)$ is clearly Lipschitz. We will show that \eqref{ellcond} holds for~$\mu_1$.

Assume that the line $\ell$ is defined by the equation $y = a x + b$.  By the symmetries of $\mu(\x)$, for proving \eqref{ellcond} we may assume that $a \in [0,1]$ and $b \geq 0$. Also, if $\ell$ hits $[-1,1]^2$, then $b \leq 1 + a $ must hold.
Let $\x_1 = (x_1, y_1)$ and $\x_2 = (x_2, y_2)$, $x_1 \leq x_2$ be the points where $\ell$ hits the boundary of $[-1,1]^2$ (these may coincide when $\ell$ hits only a corner). By the assumptions on $a$ and $b$, we have that $x_1 = -1$ and $y_1 \in [-1, 1]$. Depending on  the magnitude of $b$, $\x_2$ may lie on the upper or the right side of $[-1,1]^2$:
\begin{itemize}
\item if $0 \leq b \leq 1 - a$, then $x_2 = 1$ and $y_2 = a + b$;
\item if $1-a \leq b \leq 1 + a$, then $x_2 = \frac {1 -b}{a}$ and $y_2 = 1$.
\end{itemize}
Accordingly, \eqref{ellcond} is equivalent to
\begin{equation}\label{max1}
\max_{a \in [0,1], b \in [0, 1-a]} \, (1+a)\int_{-1}^1 (x^2 - 2 x^2 (a x + b)^2 + (a x + b)^2 ) \dd x \leq \frac 4 3
\end{equation}
and
\begin{equation}\label{max2}
\max_{a \in [0,1], b \in [1-a, 1+a]} \, (1+a)\int_{-1}^{\frac {1 -b}{a}} (x^2 - 2 x^2 (a x + b)^2 + (a x + b)^2 ) \dd x \leq \frac 4 3 \,.
\end{equation}
By evaluating the above integrals, \eqref{max1} reads as
\[
\max_{a \in [0,1], b \in [0, 1-a]} \, (1 + a) \left( \frac 2 3 - \frac{2 a^2}{15} + \frac{2 b^2}{3} \right) \leq \frac 4 3 \,.
\]
It is simple to check that the above function attains its maximum on the given domain at $a= 0, b=1$ with the maximum value being exactly $\frac 4 3$. Similarly, \eqref{max2} amounts to
\[
\max_{a \in [0,1], b \in [1-a, 1+a]} \frac {(1 + a)(-1 + 5 a^2 + 5 a^3 - a^5 + 5 b^2 + 5 a^3 b^2 - 5 b^3 -  5 a^2 b^3 + b^5)}{15 a^3} \leq \frac 4 3 \,.
\]
By analyzing the function above, one obtains that for any given $a \in [0,1]$, the above function is maximized at $b = (-1 - a + \sqrt{9 - 6 a + 9 a^2}) /2$ on the interval $b \in [1-a, 1+a]$. Substituting this value leads to a  function of $a$ which is decreasing on $(0,1]$.

Thus, we obtain that the maximum of $  \int_{\ell \,\cap\, [-1,1]^2} \mu_1(\x) \dd \|\x\|_1 $ is attained for lines $\ell$ which contain a side of $[-1,1]^2$, with the extreme value being 1.  Since
 \[
\int_{[-1,1]^2}  \mu_1 (\x) \dd \x = \frac 4 3,
\]
 Lemma~\ref{lemma_LPest} guarantees that for any $\eps>0$,
\[
p_n > \left( \frac 2 3 - \eps \right) n
\]
if $n$ is sufficiently large.

The stronger estimate $p_n > 0.7 n$ of Theorem~\ref{thm_piercing_lower} can be shown by considering the density function
\begin{equation*}
\mu_2(\x) = 0.3 (|x| + |y|) + 0.43 (|x|^3 + |y|^3) - 0.585 (|x|^3 |y| + |y|^3 |x|) - 0.16 x^2 y^2
\end{equation*}
(see Figure~\ref{fig4b}), which has been found by numerical optimization. Clearly, $\mu_2$ is Lipschitz on $[-1,1]^2$, and
\[
\int_{[-1,1]^2} \mu_2(\x) \dd \x \approx 1.4039.
\]
That \eqref{ellcond} is satisfied for $\mu_2$ is again checked by elementary calculus, although the calculations are more tedious than in the previous case because of the absolute values in the definition of $\mu_2(\x)$. We only note that, using the notation above,  the maximum line integral is taken at $a=0.5612, b=0.5612$ with the maximum value being approximately $0.9971$. Thus, because of the symmetries taken into account, lines of maximal weight go close to a corner of $[-1,1]^2$, and they are of slope around $0.56$, $-0.56$, $1.78$ or $-1.78$. Further calculations may be completed by the aid of a computer algebra software.
\end{proof}

\begin{figure}
\centering
\begin{subfigure}{.5\textwidth}
  \centering
  \includegraphics[width=.7\linewidth]{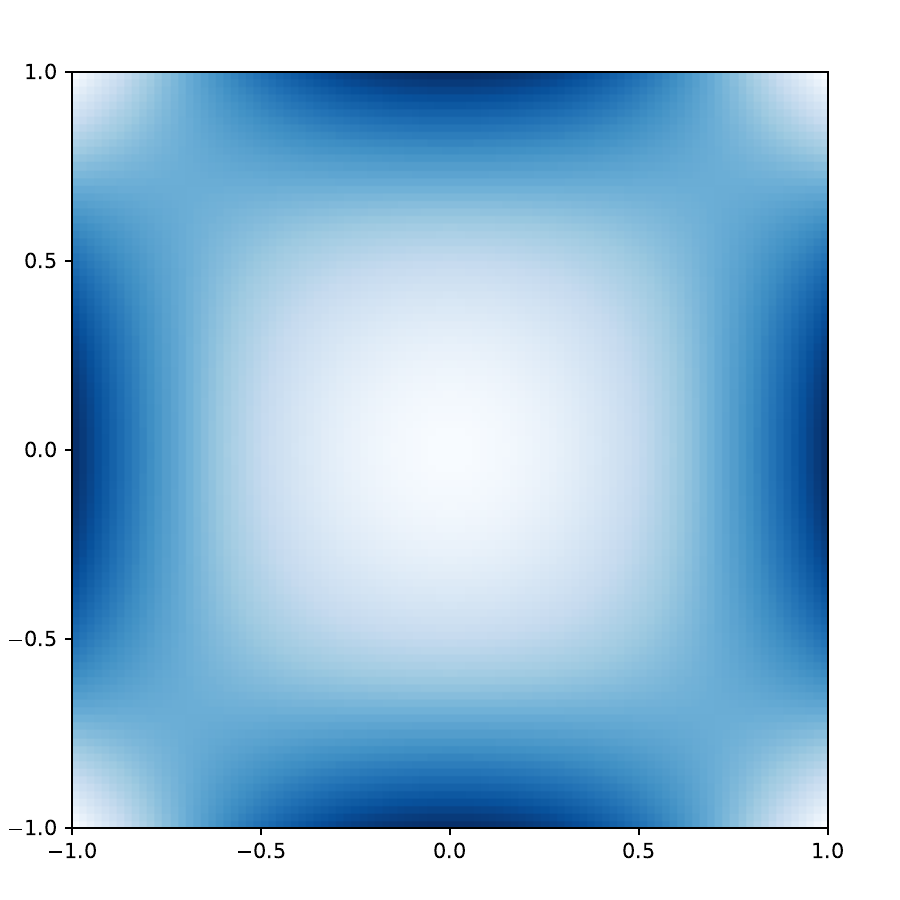}
  \caption{The function $\mu_1(x,y)$}
  \label{fig4a}
\end{subfigure}%
\begin{subfigure}{.5\textwidth}
  \centering
  \includegraphics[width=.7\linewidth]{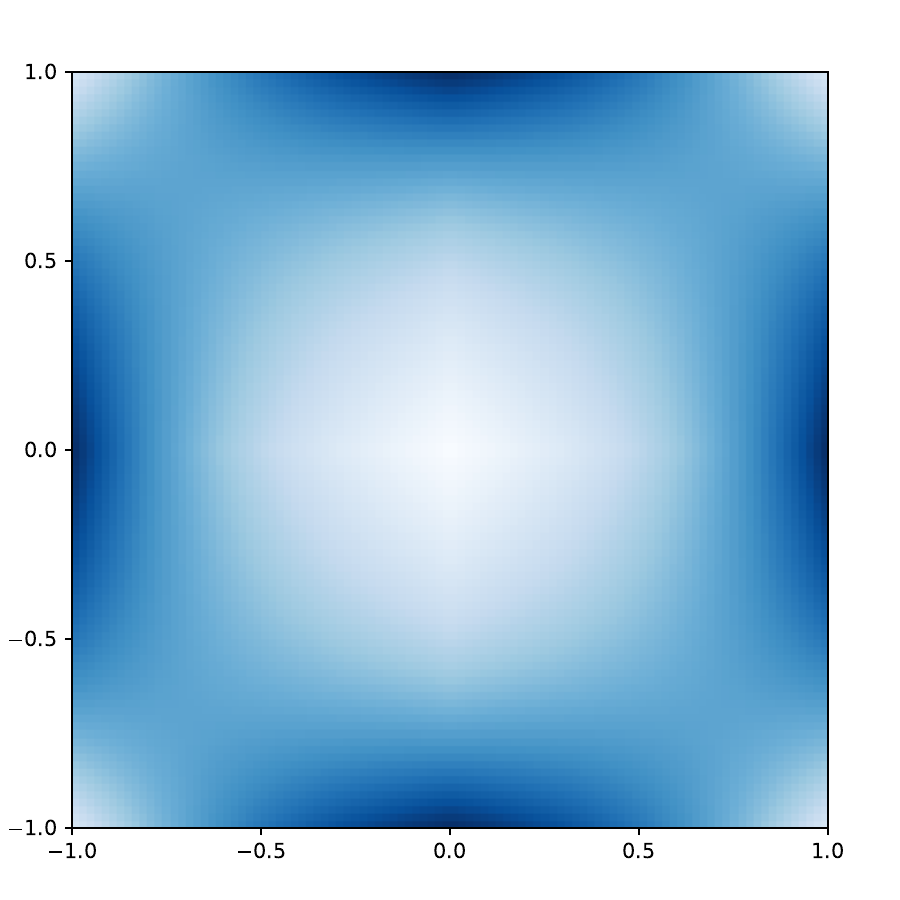}
  \caption{The function $\mu_2(x,y)$}
  \label{fig4b}
\end{subfigure}
\caption{The two density functions used in the proof of Theorem~\ref{thm_piercing_lower}. Darker (blue) colors represent values close to 0.75, while lighter (white) stands for values close to 0.}
\label{fig4}
\end{figure}

\section{Limitations of the linear programming method}
\label{section_LPlimit}

Along the lines of the previous section, one may increase the lower bound on $p_n$ by including higher order terms in the density function, although at the price of increased computational difficulty. However, in this section we prove that the conjectured value $p_n = n-1$ can not be proved by applying the continuous linear programming method. In order to show that, note that any feasible solution of the linear program (LP$_c$) yields an upper bound on the optimal value of (LP$_c$), hence, by linear programming duality, on that of (LP$_d$) as well. Therefore, in order to demonstrate the confinedness of the LP method, it suffices to provide a weight distribution $\rho(\sigma)$ on the set $S$ of  digital lines which satisfies the constraints of (LP$_c$) and for which $\sum_{\sigma \in S} \rho(\sigma) < n-1$. We will, in fact, prove a much stronger bound.
\begin{theorem}
\label{thm_LPupper}
For every sufficiently large $n$, there exists a weight distribution $\rho(\sigma)$ on $\Sc_n$ satisfying the conditions of (LP$_c$) for which
\[
\sum_{\sigma \in \Sc_n} \rho(\sigma) < 0.925 \, n.
\]
\end{theorem}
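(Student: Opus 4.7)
The target is to exhibit, for every sufficiently large $n$, a nonnegative $\rho:\Sc_n\to\R$ satisfying every covering inequality $\sum_{\sigma\ni c_{ij}}\rho(\sigma)\ge 1$ and with total mass $\sum_\sigma\rho(\sigma)<0.925\,n$, which is strictly below the integer bound $n-1$ that follows from Theorem~\ref{thm_piercing_upper}. By linear programming duality this automatically caps the best lower bound any density $\mu$ in Lemma~\ref{lemma_LPest} can produce. Since the straightforward integer solution already has mass exactly $n-1$, and $n-1>0.925\,n$ for $n\geq 14$, the improvement of Theorem~\ref{thm_LPupper} must come from genuinely exploiting the extra flexibility of fractional weights.

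My plan is to build $\rho$ as a fractional superposition of several piercing-type families of long snakes, chosen so that interior cells of $Q_n$ acquire enough redundant coverage that some weight can be shaved off the integer construction. A natural starting point is a pair of complementary families: $\mathcal F^+$, consisting of the diagonal snakes of slope $\approx +1$ from Theorem~\ref{thm_piercing_upper}, and $\mathcal F^-$, its horizontal reflection, consisting of snakes of slope $\approx -1$. Each family pierces every cell on its own, so the uniform assignment $\rho\equiv\tfrac 12$ on $\mathcal F^+\cup\mathcal F^-$ is feasible with mass $n-1$. The saving comes from adjoining a sparse third family $\mathcal F^0$ of $\sim c\,n$ additional long snakes (for a small constant $c>0$) and rebalancing the weights in a non-uniform way: heavier on snakes that cover low-multiplicity cells (typically near the boundary of $Q_n$), lighter on snakes inside the region of triple coverage, with some snakes even assigned weight $0$ when all their cells remain covered by the other families.

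Executing the plan requires three steps: (i) a concrete description of the three families $\mathcal F^+,\mathcal F^-,\mathcal F^0$, including the precise slopes and translational positions of the lines; (ii) a cell-by-cell check that the resulting weight assignment satisfies all $n^2$ covering constraints, which is essentially automatic for cells lying in the triple-covered bulk but delicate for cells near the four corners of $Q_n$, where the long snakes of $\mathcal F^\pm$ shorten and triple coverage breaks down; (iii) a final arithmetic verifying $\sum\rho<0.925\,n$. I expect step (ii) to be the main obstacle, and it typically forces the addition of a bounded number of short ``corrector'' snakes of weight $1$ to patch up corner cells; these contribute only $O(1)$ to the mass and do not affect the asymptotic leading term. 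A clean way to organize the verification is to parametrize $\mathcal F^+$ and $\mathcal F^-$ as translates of single oblique lines (as in the proof of Theorem~\ref{thm_piercing_upper}), so that the covering pattern at each cell can be described explicitly by its position modulo $n$.

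The constant $0.925$ should then emerge from optimizing the parameters of the above scheme: the cardinality of $\mathcal F^0$, the slopes of the three families, and the weight splits across them. This constant is noticeably larger than the LP optimum—which by Theorem~\ref{thm_piercing_lower} lies above $0.7\,n$—and one expects that mimicking the four-slope structure of the density $\mu_2$ used on the dual side would tighten it further. In any case, the threshold $0.7\,n$ from Theorem~\ref{thm_piercing_lower} is the hard floor below which no LP-based construction can reach, so an LP proof of the full conjecture $p_n=n-1$ is excluded.
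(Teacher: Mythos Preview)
Your proposal is a sketch of a strategy, not a proof. You list three steps---(i) specify the families $\mathcal F^\pm,\mathcal F^0$, (ii) verify the covering constraints cell by cell, (iii) compute the total mass---and carry out none of them. The constant $0.925$ never enters any calculation; you simply assert that it ``should then emerge from optimizing the parameters.'' Without a concrete choice of slopes, offsets, and weights, together with a verification that every cell receives weight at least $1$, nothing has been established. There is no a priori reason why a two-diagonal-plus-corrector scheme should land at, or anywhere near, $0.925$; you have not even argued that it gets strictly below $n-1$.

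For comparison, the paper takes a quite different route. It first proves a continuous dual of Lemma~\ref{lemma_LPest} (Lemma~\ref{lemma_LPupper}): any Lipschitz density $\nu$ on the $(s,t)$-parameter square of lines satisfying the pointwise covering condition~\eqref{nucond1} yields a feasible $\rho$ of mass $\tfrac n2\int\nu + o(n)$. It then constructs an explicit $\nu$ supported on six thin regions: four parallelograms $P_1,\dots,P_4$ that together sweep through lines of \emph{every} slope along prescribed curves in parameter space, plus two short rectangles $P_5,P_6$ concentrated near the diagonal slopes $\pm1$. The covering condition is verified via a convexity argument for the auxiliary function $\phi(\alpha)$ in~\eqref{fidef}, and a single free parameter $\gamma$ is optimized to give $\int\nu<1.849$, hence mass below $0.925\,n$. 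This continuous-density framework is what makes the constant explicitly computable; your discrete three-family plan, even if it could be completed, offers no comparable mechanism for producing a specific numerical bound.
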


Recall that $\Lc$ is the set of lines intersecting $[-1,1]^2$. We parameterize $\Lc$ as follows: for each $s \in [-1, 1]$, introduce the vector
\[
u_s = (s, 1- |s|)
\]
and for each $s, t \in [-1,1]$, define
\begin{equation}\label{ellst}
\ell(s,t) = \{ \x \in \R^2: \ \varphi_{u_s}(\x) = t \}
 \end{equation}
(cf. \eqref{phidef}). That is, $\ell(s,t)$ is the line with normal $u_s$  defined by $s x + (1 - |s|) y = t$. It is easy to check that $\ell \in \Lc$  if and only if it may be expressed in the form \eqref{ellst} with $s,t \in [-1,1]$.

Reminiscent of Lemma~\ref{lemma_LPest}, we will construct the sought-after weight distribution on $\Sc$
by applying a continuous approximation.

\begin{lemma}
\label{lemma_LPupper}
Assume that $\nu: [-1,1]^2 \rightarrow \R_{\geq 0}$ is a Lipschitz continuous density function which satisfies that for each $(x_0, y_0) \in [-1,1]^2$,
\begin{equation}
\label{nucond1}
   \int_{-1}^0 \nu (s, y_0 + s(x_0 + y_0) ) \dd s +
\int_{0}^1  \nu (s, y_0 + s(x_0 - y_0) ) \dd s \geq 1.
\end{equation}
Then for any $\eps>0$, the optimal value of {\rm (LP$_c$)} is bounded from above by
\[
\Big (  \frac 1 2 \int_{[-1,1]^2} \nu(s,t) \dd s \dd t  + \eps \Big ) n
\]
if $n$ is sufficiently large.

\end{lemma}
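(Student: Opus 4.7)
The plan mirrors Lemma~\ref{lemma_LPest}: there, weights on cells were built from a density on the plane; here, we build weights on snakes from the density $\nu$ on the parameter space $[-1,1]^2$ of lines. For every snake $\sigma \in \Sc_n$, set
\[
A(\sigma) = \{(s,t) \in [-1,1]^2 : \sigma(\ell(s,t)) = \sigma\},
\]
and
\[
\rho_n(\sigma) = (1 + \delta_n)\,\frac{n}{2} \int_{A(\sigma)} \nu(s,t) \dd s \dd t,
\]
where $\delta_n \to 0$ is a small multiplicative correction fixed below. The set of $(s,t)$ for which $\ell(s,t)$ passes through some grid point of $Q_n$ is a finite union of line segments, hence has Lebesgue measure zero, so $\{A(\sigma)\}_{\sigma}$ is a measurable partition of $[-1,1]^2$ up to a null set. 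Consequently,
\[
\sum_{\sigma \in \Sc_n} \rho_n(\sigma) = (1+\delta_n)\,\frac{n}{2} \int_{[-1,1]^2} \nu(s,t) \dd s \dd t,
\]
which is at most $\bigl(\tfrac12 \int_{[-1,1]^2}\nu + \eps\bigr) n$ as soon as $\delta_n = o(1)$.

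For feasibility, fix a cell $c_{ij}$ with center $(x_0, y_0) = \bigl(-1 + (2i-1)/n,\, -1 + (2j-1)/n\bigr)$. Since $c_{ij} \in \sigma(\ell(s,t))$ iff $\ell(s,t)$ pierces $c_{ij}$,
\[
\sum_{\sigma \,:\, c_{ij} \in \sigma} \rho_n(\sigma) = (1+\delta_n)\,\frac{n}{2} \int_{C_{ij}} \nu(s,t) \dd s \dd t,
\]
where $C_{ij}$ is the parameter set of lines piercing $c_{ij}$. Using $\|u_s\|_1 = 1$, a direct computation shows that $\ell(s,t)$ pierces $c_{ij}$ iff $|t - g(s)| < 1/n$, where
\[
g(s) = s x_0 + (1-|s|) y_0.
\]
On $s \in [0,1]$, $g(s)$ is a convex combination of $y_0$ and $x_0$, and on $s \in [-1,0]$ of $y_0$ and $-x_0$; thus $|g(s)| \leq \max(|x_0|, |y_0|) \leq 1 - 1/n$, and the strip $C_{ij}$ sits entirely inside $[-1,1]^2$, so no boundary correction is required.

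Now invoke the Lipschitz continuity of $\nu$, say with constant $\lambda$. On the strip $|t - g(s)| < 1/n$ we have $|\nu(s,t) - \nu(s,g(s))| \leq \lambda/n$, so integrating in $t$ first,
\[
\int_{C_{ij}} \nu(s,t) \dd s \dd t \geq \frac{2}{n} \int_{-1}^1 \nu(s, g(s)) \dd s - \frac{4\lambda}{n^2}.
\]
Splitting the $s$-integral at $0$ yields exactly the two terms appearing in hypothesis \eqref{nucond1}, so the remaining integral is at least $1$ and
\[
\sum_{\sigma \,:\, c_{ij} \in \sigma} \rho_n(\sigma) \geq (1 + \delta_n)\left(1 - \frac{2\lambda}{n}\right).
\]
Choosing $\delta_n = 4\lambda/n$ makes the right-hand side at least $1$ for $n > 4\lambda$, so $\rho_n$ is feasible for (LP$_c$); the extra mass introduced by the rescaling is $\delta_n \cdot \frac{n}{2}\int\nu = 2\lambda\int\nu$, a fixed constant, which is bounded by $\eps n$ once $n$ is large enough.

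The main technical worry is that for a cell $c_{ij}$ touching the boundary of $Q_n$ the strip $C_{ij}$ might stick out of the parameter square, forcing a Lipschitz extension of $\nu$ or an extra boundary correction. The convex-hull estimate $|g(s)| \leq \max(|x_0|,|y_0|)$ rules this out cleanly, and is precisely the reason the hypothesis \eqref{nucond1} is formulated via the two piecewise-linear ``diagonal'' parameterizations in $(s,t)$-space rather than a single one. With that geometric point handled, the rest is a routine Riemann-sum argument in parameter space, controlled by the Lipschitz constant $\lambda$.
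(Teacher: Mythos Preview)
Your proof is correct and follows the same route as the paper's: define $\rho(\sigma)$ by integrating $\nu$ over the $(s,t)$-region giving rise to $\sigma$, identify the parameter set of lines piercing $c_{ij}$ as the width-$2/n$ strip around the broken line $t=g(s)$, and use the Lipschitz bound to pass to the limiting line integral of hypothesis~\eqref{nucond1}. Your treatment is in fact more explicit than the paper's in two places---the quantitative rescaling factor $\delta_n=4\lambda/n$ and the observation $|g(s)|\le\max(|x_0|,|y_0|)\le 1-1/n$ ensuring $C_{ij}\subset[-1,1]^2$---both of which the paper leaves implicit in the phrase ``the integrals converge uniformly''.
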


\begin{proof}
To each  digital line $\sigma \in \Sc_n$, assign the weight
\begin{equation}\label{rhodef}
  \rho(\sigma) = \frac n 2 \int_{(s,t) \in [-1,1]^2: \ \sigma(\ell(s,t)) = \sigma  } \nu(s,t) \dd s \dd t.
\end{equation}
Clearly,
\begin{equation}
\label{nuint}
\sum_{\sigma \in \Sc_n} \rho(\sigma) = \frac n 2 \int_{[-1,1]^2} \nu(s,t) \dd s \dd t.
\end{equation}
In order for the weights $\rho(\sigma)$ to satisfy the conditions of (LP$_c$), the following inequality must hold for all $i,j \in [n]$:
\begin{equation}
\label{nucond}
\frac n 2 \int_{(s,t) \in [-1,1]^2: \ c_{ij} \in \sigma(\ell(s,t)) } \nu(s,t) \dd s \dd t
 \geq 1.
\end{equation}
The above integration goes over the set of parameters $(s,t)$ whose corresponding lines pierce~$c_{ij}$. This region can be determined as follows. Let $(x_0, y_0)$ be the center of $c_{ij}$. Recall that the side-length of $c_{ij}$ equals to $\frac 2 n$. First, assume that $s \leq 0$, equivalently, that the slope of $\ell(s,t)$ is non-negative. Then $\ell(s,t)$ pierces $c_{ij}$ if and only if it contains a point of the form $(x_0 + a, y_0 - a)$ with $|a| < \frac 1 n $. That is equivalent to the condition
\[
|s (x_0 + y_0) - t + y_0| < \frac 1 n \,.
\]
Similarly, if $s \geq 0$, then $\ell(s,t)$ intersects $c_{ij}$ iff it goes through a point of the form $(x_0 + b, y_0 + b)$ with $|b| < \frac 1 n $, which is equivalent to
\[
|s (x_0 - y_0) - t + y_0| < \frac 1 n \,.
\]
Thus, we derive that the set of lines in $\Lc$ which pierce $c_{ij}$ is represented on the $(s,t)$-plane by the region $R_{ij}$ which is the vertical parallel neighborhood of radius $\frac 1 n$ of the union of two segments, connecting the points $(-1, -x_0)$ and $(0, y_0)$, and $(0,y_0)$ and $(1, x_0)$, respectively. In particular, all the cross-sections of $R_{ij}$ parallel to the $t$-axis are of length $\frac 2 n$ (see Figure~\ref{fig5}).

\begin{figure}[h]
  \centering
  \includegraphics[width = 0.45 \textwidth]{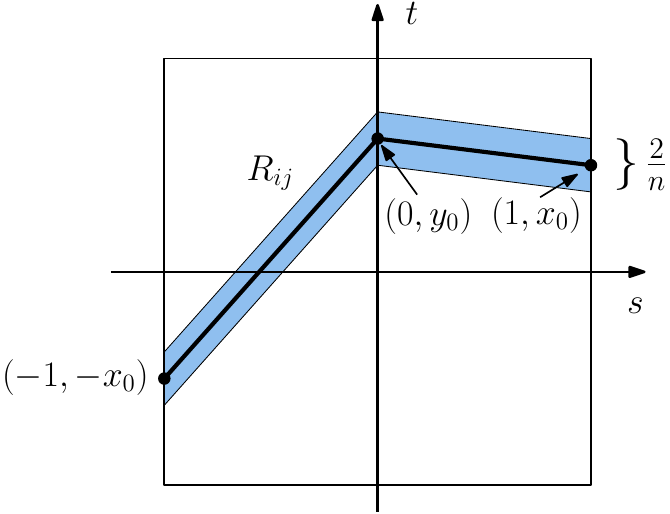}
  \caption{ The region $R_{ij}$ on the $(s,t)$-plane which represents  the set of lines intersecting $c_{ij}$}
\label{fig5}
\end{figure}

We note that using different parametrizations, several algorithms have been given for determining the set of lines which generate a given {\em digital segment} \cite{DS84, MI85, LB93, DA06} (the set in question is referred to as an equivalence class of lines, or the domain of a chaincode string, or the preimage of the digital segment, respectively).
We chose the $(s,t)$-parametrization in order to facilitate the construction and verification of a suitable density function on the space $\Lc$. In particular, the simple geometric structure of the region $R_{ij}$ expedites the forthcoming calculations.


By the Lipschitz property of $\nu$, the integrals in \eqref{nucond} converge uniformly as $n \to \infty$:
\[
\frac n 2 \int_{(s,t) \in [-1,1]^2: \ c_{ij} \in \sigma(\ell(s,t)) } \nu(s,t) \dd s \dd t \rightarrow \int_{-1}^0 \nu (s, y_0 + s(x_0 + y_0) ) \dd s +
\int_{0}^1  \nu (s, y_0 + s(x_0 - y_0) ) \dd s.
\]
Thus, \eqref{nucond1} and  \eqref{nucond} guarantee that the conditions of (LP$_c$) hold for a suitably scaled copy of $\rho$ provided by \eqref{rhodef}, and the statement of Lemma~\ref{lemma_LPupper} follows from  \eqref{nuint}.
\end{proof}

We are left with the task of finding a suitable density function on the $(s,t)$-plane.

\begin{proof}[Proof of Theorem~\ref{thm_LPupper}]
We will construct a density function $\nu(s,t)$ which is not Lipschitz continuous, but satisfies \eqref{nucond1} as well as
\begin{equation}\label{nuint1849}
   \int_{[-1,1]^2} \nu(s,t) \dd s \dd t < 1.849.
\end{equation}
Lemma~\ref{lemma_LPupper} may then be applied to a suitably fine Lipschitz continuous approximation of $\nu$, yielding the estimate of Theorem~\ref{thm_LPupper}.

The density $\nu$ is defined using a parameter $\gamma \in [0,1]$ whose value we will set later.
Let $\eps>0$ be a small positive number. Define $P_1$ to be the parallelogram
\[
P_1 = \mathrm{conv}\{(0,1), (-\eps, 1), (-1, -1), (-1 + \eps, -1 )\}
\]
of area  $2 \eps$.
Let $P_2 = \{(s, t):\ (s, -t) \in P_1\}$ be the mirror image of $P_1$ with respect to the $s$-axis, and set $P_3 = - P_1$ and $P_4 = -P_2$.
Define $P_5$ to be the rectangle
\[
P_5 = \mathrm{conv}\Big \{\Big(- \frac {1 - \eps} {2}, h \Big), \Big(- \frac {1 + \eps} {2}, h \Big),\Big(- \frac {1 + \eps} {2}, -h \Big), \Big(- \frac {1 - \eps} {2}, -h \Big) \Big \}
\]
where $h = \frac {\tan \gamma  }{2}  + \eps$. Note that if $\eps< 1 - \frac{\tan 1}{2}$, then $h\in[0,1)$. Finally, let $P_6 = - P_5$ (see Figure~\ref{fig6a}).

\begin{figure}
\centering
\begin{subfigure}{.5\textwidth}
  \centering
  \vspace{3 pt}
  \includegraphics[height=.675\linewidth]{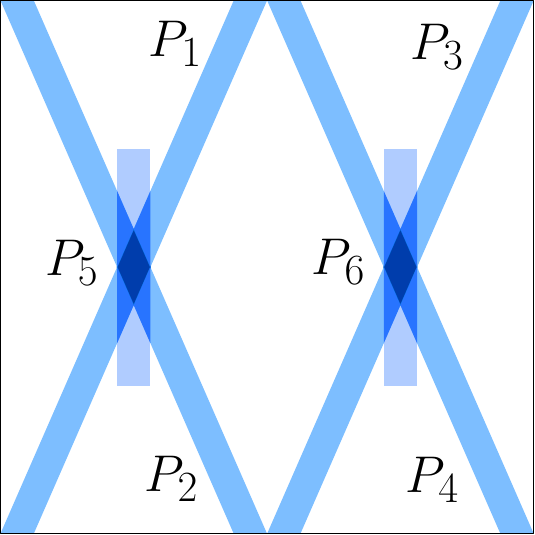}
  \vspace{13 pt}
  \caption{Structure of the function $\nu(s,t)$ \\ \phantom{gh}}
  \label{fig6a}
\end{subfigure}%
\begin{subfigure}{.5\textwidth}
  \centering
  \includegraphics[height=.8\linewidth]{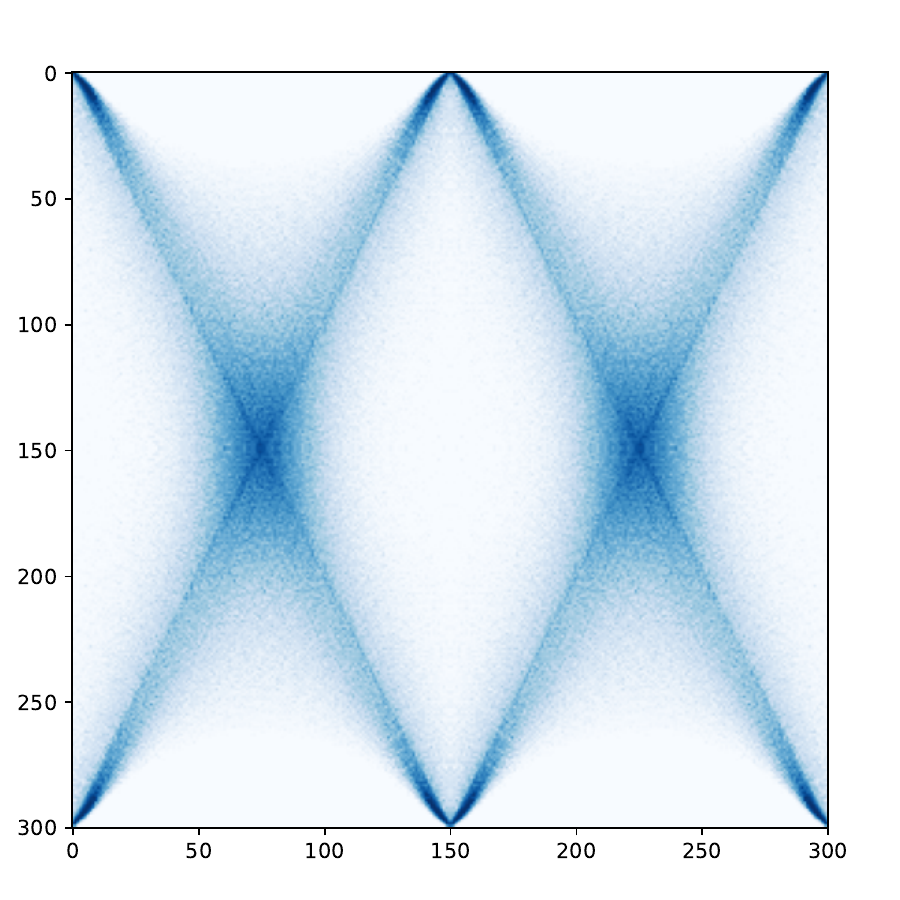}
  \caption{Discrete density function on the $300 \times 300$ grid, found by computer search }
  \label{fig6b}
\end{subfigure}
\caption{Dual density functions on the $(s,t)$-plane}
\label{fig6}
\end{figure}


We will say that a line $\ell$ is of  {\em angle} $\alpha$ if its slope is $\tan \alpha$. Let $\frac \pi 2 - \beta$ be the angle of the line connecting the points $(-1 + \eps/2, -1)$ and $(- \eps/2, 1)$. Then $\tan \beta = \frac {1 - \eps}{2}$.

Now, let $\ell$ be a line of angle $\alpha$ which goes through two points $(-1, t_1)$ and $(0, t_2)$ with $t_1, t_2 \in [-1, 1]$. A simple calculation shows (see Figure~\ref{fig7a}) that the horizontal projection of $\ell \cap P_1$ has length
\[
\frac{\eps \cos \alpha \cos \beta}{\cos(\alpha + \beta)}.
\]
By symmetry, the horizontal projection of $\ell \cap P_2$ has length $\frac{\eps \cos \alpha \cos \beta}{\cos(\alpha - \beta)}$. Let
\begin{equation}\label{fidef}
\phi(\alpha) = \frac{ \cos \alpha \cos \beta}{\cos(\alpha + \beta)} + \frac{ \cos \alpha \cos \beta}{\cos(\alpha - \beta)}.
\end{equation}
It is easy to check that $\phi(\alpha)$ is symmetric, convex on $(- \arctan 2, \arctan 2)$, and it attains its minimum on this interval at 0 with $\phi(0) =2$. In particular, $\phi$ is increasing on $[0, \arctan 2]$.

\begin{figure}
\centering
\begin{subfigure}{.5\textwidth}
  \centering
  \includegraphics[height=.5\linewidth]{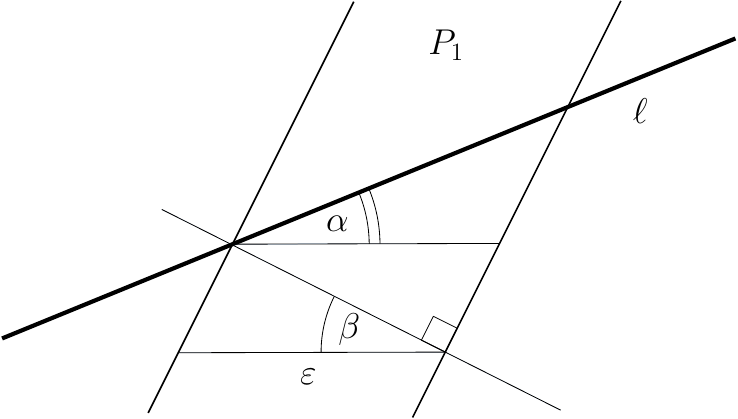}
  \caption{Scheme of $\ell \cap P_1$}
  \label{fig7a}
\end{subfigure}%
\begin{subfigure}{.5\textwidth}
  \centering
  \includegraphics[height=.5\linewidth]{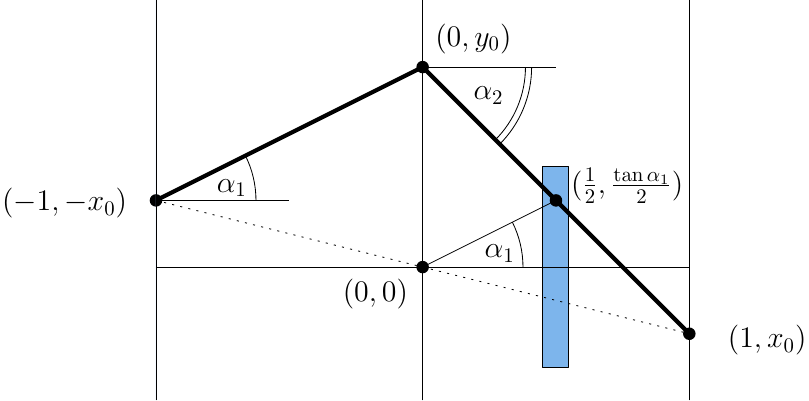}
  \caption{The case $|\alpha_1| + |\alpha_2| < \gamma$}
  \label{fig7b}
\end{subfigure}
\caption{}
\label{fig7}
\end{figure}

Next, we define the density function $\nu (s,t)$. Set $w = \frac 1 {2 \phi (\gamma)}$.
Denote by $\chi_A$ the indicator function of the set $A$, and let
\[
\nu_1 (s,t)= \frac {w} { \eps} \sum_{i=1}^4 \chi_{P_i}(s,t).
\]
Introduce
\[
\psi(t) =  \Big( 1 - \frac{ \phi(\arctan 2 t)}{\phi(\gamma)}\Big).
\]
Then $\psi(t)$ is convex on $(-\frac \pi 2, \frac \pi 2)$ with the maximum value taken at $t = 0$, and $\psi(t) >0$ for $|t| \leq h$. Define
\begin{equation*}
\nu_2 (s,t) =
\begin{cases}
 \frac 1 {\eps} \psi(t) &\textrm{ for }(s,t) \in P_5 \cup P_6 \\
 0 &\textrm{ otherwise,}
\end{cases}
\end{equation*}
and let $\nu(s,t) = \nu_1(s,t) + \nu_2(s,t)$. We will show that $\nu$ satisfies the condition \eqref{nucond1}.

Let  $x_0, y_0 \in [-1, 1]$ be arbitrary, and denote by $\alpha_1$ and $\alpha_2$ the angle of the lines through the points $(-1, -x_0)$ and $(0, y_0)$, and $(0,y_0)$ and $(1, x_0)$, respectively. Then, since $\phi$ is symmetric and convex,
\begin{align}\label{nu1int}
\begin{split}
\int_{-1}^0\nu_1 (s, y_0 &+ s(x_0 + y_0) ) \dd s + \int_{0}^1 \nu_1 (s, y_0 + s(x_0 - y_0) ) \dd s = w ( \phi(\alpha_1) + \phi(\alpha_2)) \\
&\geq 2 w \phi \left( \frac{|\alpha_1| + |\alpha_2|}{2} \right)= \frac{\phi \left( \frac{|\alpha_1| + |\alpha_2|}{2} \right)}{\phi(\gamma)}\, .
\end{split}
\end{align}
By the monotonicity of $\phi$, the value of the above integral is at least 1 whenever $|\alpha_1 |+ |\alpha_2| \geq 2 \gamma$. Since $\nu(s,t) \geq \nu_1(s,t)$, \eqref{nucond1} is satisfied for such pairs $x_0, y_0$.

Assume now that $|\alpha_1| + |\alpha_2|<2 \gamma$, and $|\alpha_1| \leq |\alpha_2|$. Note that $y_0 = -x_0 \pm \tan \alpha_1$, and therefore
the line containing $(0,y_0)$ and $(1, x_0)$ passes through $(\frac 1 2, \frac{\tan \alpha_1} {2})$ or $(\frac 1 2, -\frac{\tan \alpha_1} {2})$ (see Figure~\ref{fig7b}). Thus, symmetry and convexity of $\psi(t)$ implies that
\[
\int_{0}^1 \nu_2 (s, y_0 + s(x_0 - y_0) ) \dd s \geq  \psi\left(\frac{\tan \alpha_1} {2}\right) = 1 - \frac{\phi(|\alpha_1|)}{\phi(\gamma)}.
\]
Accordingly, by \eqref{nu1int}, and since $|\alpha_1| \leq |\alpha_2|$,
\begin{align*}
&\int_{-1}^0\nu (s, y_0 + s(x_0 + y_0) ) \dd s + \int_{0}^1 \nu (s, y_0 + s(x_0 - y_0) ) \dd s\\
&\geq 2w\phi ( |\alpha_1|)+ 1 - \frac{\phi(|\alpha_1|)}{\phi(\gamma)} = 1.
\end{align*}
Thus, $\nu(s,t)$ satisfies \eqref{nucond1}, and we must show \eqref{nuint1849}. Recall that  $ \textrm{Area}(P_i) = 2 \eps$ for $i = 1, \dots, 4$ and that the horizontal projections of $P_5$ and $P_6$ are of length $\eps$. Therefore,
\begin{align*}
 \int_{[-1,1]^2} \nu(s,t) \dd s \dd t &=  \int_{[-1,1]^2} \nu_1(s,t) \dd s \dd t + \int_{[-1,1]^2} \nu_2(s,t) \dd s \dd t \\
 &= \frac{4}{\phi(\gamma)}  + 2 \int_{-h}^h \psi(t) \dd t\\
 &= \frac{4}{\phi(\gamma)}  + 2 \tan \gamma - \frac 8 {\phi(\gamma)} \arctanh \left( \frac{\tan \gamma}{2}+ \eps \right) \\
 &\approx  \frac{4}{\phi(\gamma)}  + 2 \tan \gamma - \frac 8 {\phi(\gamma)} \arctanh \left( \frac{\tan \gamma}{2}\right).
\end{align*}
On the interval $[0,1]$, the above quantity is minimal at $\gamma= 0.746$ with the attained value of~$1.8485$. Therefore, by setting this value for $\gamma$, for sufficiently small $\eps$, the integral of $\nu(s,t)$ on $[-1,1]^2$ is less than $1.849$.
\end{proof}

Our goal above was to demonstrate that the linear programming method cannot yield a proof for Conjecture~\ref{conj_piercing}, and we did not set off to minimize $\int_{[-1,1]^2} \nu(s,t) \dd s \dd t$ among suitable density functions. By refining the construction, the factor 0.925 of Theorem~\ref{thm_LPupper} can be improved. For example, approximating $[-1,1]^2$ with a $300\times 300$ grid, the discrete density function found by computer search  (see Figure~\ref{fig6b}) yields the upper estimate $0.7915 \, n$.

\section{Higher dimensions}
The analogous questions may be formulated in higher dimensions as well, when we would like to hit or cut (pierce) the cells of the $d$-dimensional box $Q_n^d$ of size $n \times n \times \ldots \times n$ with as few hyperplanes as possible. Higher dimensional analogues of Proposition~\ref{prop_line} were studied in \cite{BF21} and \cite{BF21+}. The authors proved that in the 3-dimensional case, any given plane cuts at most $\frac 9 4 n^2 + 2n +1$ cells, while the upper bound in the $d$-dimensional case is $v_d n^{d-1} (1 + o(1))$ where $v_d \approx \sqrt{6 d /\pi}$ is a well-defined constant. Thus, we derive that the minimum number of hyperplanes needed to cut each cell is at least $\frac 4 9 n (1 + o(1))$ when $d=3$ and at least $\frac 1 {\sqrt{2d}} n (1 + o(1))$ when $d \geq 4$. On the other hand, $n$ parallel hyperplanes clearly suffice.

Turning to the hitting problem, the situation is different: the proof of Theorem ~\ref{thm_hitting} extends to higher dimensions with no difficulty. Therefore, we obtain that the minimal number of hyperplanes needed to hit each cell of $Q_n^d$ is exactly $\lceil \frac n 2 \rceil$.

\medskip
{\bf Acknowledgments.} We are grateful to B. Keszegh and D. Pálvölgyi as well as for the anonymous referees for useful suggestions and for calling our attention to the field of digital geometry.

\bigskip

\noindent
{\sc Gergely Ambrus}
\smallskip

\noindent
{\em Alfréd Rényi Institute of Mathematics, Budapest, Hungary} and\\ {\em Bolyai Institute, University of Szeged, Hungary}
\smallskip

\noindent
e-mail address: \texttt{ambrus@renyi.hu}

\bigskip

\noindent
{\sc Imre Bárány}
\smallskip

\noindent
{\em Alfréd Rényi Institute of Mathematics, Budapest, Hungary}, and \\
{\em Department of Mathematics, University College London, UK}
\smallskip

\noindent
e-mail address: \texttt{barany@renyi.hu}
\bigskip

\noindent
{\sc P\'eter Frankl}
\smallskip

\noindent
{\em Alfréd Rényi Institute of Mathematics, Budapest, Hungary }
\smallskip

\noindent
e-mail address: \texttt{peter.frankl@gmail.com}
\bigskip

\noindent
{\sc Dániel Varga}
\smallskip

\noindent
{\em Alfréd Rényi Institute of Mathematics, Budapest, Hungary }
\smallskip

\noindent
e-mail address: \texttt{daniel@renyi.hu}

\end{document}